\theoremstyle{plain}
\newtheorem{Thm}{Theorem}[section]
\newtheorem{Lem}[Thm]{Lemma}
\newtheorem{Cor}[Thm]{Corollary}
\newtheorem{Prop}[Thm]{Proposition}
\newtheorem{Fact}[Thm]{Fact}
\newtheorem{Conj}[Thm]{Conjecture}
\theoremstyle{definition}
\newtheorem{Def}[Thm]{Definition}
\newtheorem{Ex}[Thm]{Example}
\newcommand{\nats}{\mbox{\( \mathbb N \)}}
\newcommand{\set}[1]{\{#1\}}
\DeclareMathOperator{\sL}{\mathscr{L}}
\DeclareMathOperator{\bb}{\bar{\mathit{b}}}
\DeclareMathOperator{\ba}{\bar{\mathit{a}}}
\DeclareMathOperator{\sP}{\wp}
\DeclareMathOperator{\sC}{\mathscr{C}}
\DeclareMathOperator{\UP}{ \prod_{\mathit{U} } \mathit{L}}
\DeclareMathOperator{\bw}{\bigwedge}
\DeclareMathOperator{\bv}{\bigvee}
\DeclareMathOperator{\DL}{\textbf{DL}}
\DeclareMathOperator{\CRL}{\textbf{CRL}}
\DeclareMathOperator{\mCRL}{\textbf{mCRL}}
\DeclareMathOperator{\jCRL}{\textbf{jCRL}}
\DeclareMathOperator{\biCRL}{\textbf{biCRL}}
\DeclareMathOperator{\R}{\mathbb{R}}
\DeclareMathOperator{\Q}{\mathbb{Q}}
\DeclareMathOperator{\xL}{\mathit{x}^{\bL}}
\DeclareMathOperator{\yL}{\mathit{y}^{\bL}}
\DeclareMathOperator{\zL}{\mathit{z}^{\bL}}
\DeclareMathOperator{\bL}{\mathbb{A}}
\DeclareMathOperator{\bS}{\mathbb{S}}
\DeclareMathOperator{\sS}{\mathit{s}^{\bS}}
\DeclareMathOperator{\uS}{\mathit{u}^{\bS}}
\DeclareMathOperator{\tS}{\mathit{t}^{\bS}}
\begin{document}

\title{Completely representable lattices}
\author{Robert Egrot and Robin Hirsch}
\date{}
\maketitle

\begin{abstract}
It is known that a lattice is representable as a ring of sets iff the lattice is distributive.
$\CRL$  is the class of  bounded distributive lattices (DLs) which have representations preserving arbitrary joins and meets.  $\jCRL$ is the class of DLs which have representations preserving arbitrary joins, $\mCRL$ is the class of DLs which have representations preserving arbitrary meets, and $\biCRL$ is defined to be $\jCRL \cap \mCRL$.   We prove
\begin{equation*} \CRL\subset \biCRL=\mCRL\cap\jCRL\subset \mCRL\neq\jCRL \subset \DL  \end{equation*}
where the marked inclusions are proper.  

Let $L$ be a DL.  Then $L\in\mCRL$ iff $L$ has a distinguishing set of complete, prime filters.  Similarly, $L\in\jCRL$ iff $L$ has a distinguishing set of completely prime filters, and $L\in\CRL$ iff $L$ has a distinguishing set of complete, completely prime filters.

Each of the classes above is shown to be  \emph{pseudo-elementary} hence closed under ultraproducts.  
The class $\CRL$ is not closed under elementary equivalence, hence it is not elementary.

\end{abstract}

\begin{section}{Introduction}\label{S:intro}
An \emph{atomic representation} $h$ of a Boolean algebra $ B$ is a representation $h\colon B\to\sP(X)$ (some set $X$) where $h(1)=\bigcup \set{h(a)\colon a\mbox{ is an atom of } B}$. It is known  that a representation of a Boolean algebra is a complete representation (in the sense of a complete embedding into a field of sets) if and only if it is an atomic representation  and hence that the class of completely representable Boolean algebras is precisely the class of atomic Boolean algebras, and hence is elementary \cite{HirHod97}. This result is not obvious as the usual definition of a complete representation is thoroughly second order. The purpose of this note is to investigate the possibility of corresponding results for the class of bounded, distributive lattices. The situation is a little more complex in this case as in the absence of Boolean complementation a representation of a (distributive) lattice may be complete with respect to one of the lattice operations but not the other. 

It turns out (theorem \ref{not elem}) that the class $\CRL$ of completely representable bounded, distributive lattices is \emph{not} elementary, however, building on early work in lattice theory by Birkhoff \cite{Bir33}, and Birkhoff and Frink \cite{BirFri48} it is possible to characterise complete representability of a lattice in terms of the existence of certain prime filters (or dually using prime ideals). Using this characterisation an alternative proof of the identification of the completely representable Boolean algebras with the atomic ones is provided. It is also shown that $\CRL$, and the classes of (bounded, distributive) lattices that have representations respecting either or both arbitrary infima and suprema \emph{are} pseudo-elementary, and thus closed under ultraproducts. Using the well known fact that a class is elementary if and only if it is closed under isomorphism, ultraproducts and ultraroots it follows that $\CRL$ is not closed under ultraroots. The question of whether this holds for the other classes of lattices under consideration, and thus whether they are elementary, remains open at this time and is the subject of ongoing investigation.
\end{section}

\begin{section}{Representations of bounded, distributive lattices}
\begin{Def}[Representation]
Let $L$ be a bounded, distributive lattice. A \emph{representation} of $L$ is an embedding $h\colon L\to\sP(X)$ for some set $X$, where $\sP(X)$ is considered as a ring of sets, under the operations of set union and intersection. When such a representation exists we say that $L$ is \emph{representable}.
\end{Def}
For simplicity we shall assume that our representations $h\colon L\to \sP(X)$ are `irredundant', that is, for all $x\in X$ there is some $a\in L$ with $x\in h(a)$. For irredundant representations $h\colon L\to \sP(X)$ the `inverse image' $h^{-1}[x]=\{a\in L\colon  x\in h(a)\}$ of any point $x\in X$ is a prime filter, with closure under finite meets coming from finite meet preservation by the representation, and primality coming from finite join preservation. Upward closure can be derived from either of these preservation properties using the equivalent definitions of the order relation in a lattice. Conversely, any set $K$ of prime filters of $L$ with the property that for every pair $a\neq b \in L$ there exists $f\in K$ with either $a\in f$ and $b\not \in f$ or vice versa determines a representation $h_K\colon L \to \sP(K)$ using $h_K(a)=\{f\in K\colon  a\in f\}$ (note that for $f\in K$ we have $h_K^{-1}[f]=f$). For ease of exposition later we introduce a definition for sets of sets generalising the condition for filters given above.
\begin{Def}[Distinguishing set]
A set $S\subseteq\sP(L)$ is \emph{distinguishing} over $L$ iff for every pair $a\neq b \in L$ there exists $s\in S$ with either $a\in s$ and $b\not \in s$ or vice versa.
\end{Def}
Using this definition we state the results of the preceding discussion as a simple theorem:
\begin{Thm} A bounded distributive lattice $L$ is representable if and only if it has a distinguishing set of prime filters.
\end{Thm}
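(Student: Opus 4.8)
The plan is to verify both implications by assembling the observations made in the discussion preceding the statement, since the theorem is essentially a repackaging of those remarks. I would treat the two directions separately and, for the forward direction, first reduce to the irredundant case.

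For the ($\Rightarrow$) direction, suppose $h\colon L\to\sP(X)$ is a representation. Replacing $X$ by $\bigcup_{a\in L}h(a)$ if necessary, I may assume $h$ is irredundant without altering any of the sets $h(a)$, so that the preceding discussion applies: for each $x\in X$ the inverse image $h^{-1}[x]=\{a\in L\colon x\in h(a)\}$ is a prime filter. Set $K=\{h^{-1}[x]\colon x\in X\}$. To see that $K$ is distinguishing, take $a\neq b$ in $L$; since $h$ is an embedding it is injective, so $h(a)\neq h(b)$ and there is a point $x$ lying in exactly one of $h(a),h(b)$, say $x\in h(a)\setminus h(b)$. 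Then $a\in h^{-1}[x]$ while $b\notin h^{-1}[x]$, so the prime filter $h^{-1}[x]\in K$ separates $a$ and $b$. Hence $K$ is a distinguishing set of prime filters.

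For the ($\Leftarrow$) direction, suppose $K$ is a distinguishing set of prime filters of $L$, and define $h_K\colon L\to\sP(K)$ by $h_K(a)=\{f\in K\colon a\in f\}$ as in the discussion. I would check that $h_K$ is a lattice embedding: meet preservation, $h_K(a\w b)=h_K(a)\cap h_K(b)$, follows from closure of filters under finite meets together with upward closure; join preservation, $h_K(a\vee b)=h_K(a)\cup h_K(b)$, follows from upward closure for the inclusion $\supseteq$ and from primality of the filters for $\subseteq$; and injectivity is exactly the distinguishing hypothesis, since $a\neq b$ yields some $f\in K$ containing precisely one of the two, whence $h_K(a)\neq h_K(b)$. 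One also verifies that the bounds are respected, which is automatic: each prime filter is proper, so $0\notin f$ and $h_K(0)=\emptyset$, while $1$ lies in every nonempty upward-closed filter, so $h_K(1)=K$. Thus $h_K$ is a representation.

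There is no serious obstacle here; the content of the theorem lies in the two structural facts already extracted before the statement, namely that inverse images of points under an irredundant representation are prime filters, and that the sets $h_K(a)$ convert meets and joins of $L$ into intersections and unions via closure, upward closure and primality. The only points requiring a word of care are the reduction to irredundancy in the forward direction and the explicit appeal to primality — as opposed to mere filterhood — to secure join preservation in the backward direction.
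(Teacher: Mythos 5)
Your proof is correct and follows exactly the paper's route: the theorem is stated there as a summary of the preceding discussion, namely that inverse images of points under an irredundant representation are prime filters (with meet-closure from meet preservation and primality from join preservation), and that a distinguishing set $K$ of prime filters yields the representation $h_K(a)=\{f\in K\colon a\in f\}$. Your additional details --- the harmless restriction of $X$ to $\bigcup_{a\in L}h(a)$ to achieve irredundancy, and the explicit verification that $h_K$ preserves meets, joins and bounds --- simply make explicit what the paper leaves implicit.
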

As a consequence of the prime ideal theorem for distributive lattices we have:
\begin{Thm}
A bounded lattice is representable if and only if it is distributive.
\end{Thm}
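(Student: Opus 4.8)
The plan is to prove the two implications separately, with the forward direction being purely formal and the converse reducing, via the characterisation just established, to a separation argument.

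For the ``only if'' direction, suppose $h\colon L\to\sP(X)$ is a representation, so that $h$ is an embedding preserving finite meets and joins, realised in $\sP(X)$ as intersection and union. Then the image $h[L]$ is a sublattice of the ring of sets $\sP(X)$, and in any ring of sets the lattice operations $\cap,\cup$ satisfy the distributive laws. Hence $h[L]$ is distributive, and since $h$ is a lattice isomorphism onto its image, distributivity transfers back to $L$. This direction requires nothing beyond the definition of a representation.

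For the ``if'' direction, assume $L$ is distributive. By the preceding theorem it suffices to produce a distinguishing set of prime filters, and I claim the set of \emph{all} prime filters of $L$ already suffices. Given $a\neq b$ in $L$, without loss of generality $a\not\leq b$ (otherwise interchange their roles). Consider the principal filter $\uparrow a=\set{c\in L\colon a\leq c}$ and the principal ideal $\downarrow b=\set{c\in L\colon c\leq b}$. These are disjoint, since any common element $c$ would give $a\leq c\leq b$, contradicting $a\not\leq b$. The prime ideal theorem for distributive lattices then yields a prime filter $f$ with $\uparrow a\subseteq f$ and $f\cap{\downarrow b}=\emptyset$; in particular $a\in f$ while $b\notin f$. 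Thus the collection of prime filters distinguishes every pair, and the preceding theorem delivers a representation.

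The main obstacle is concentrated entirely in the existence of the separating prime filter, i.e. in the prime ideal theorem itself, which is a genuinely nontrivial (and choice-dependent) result. Once it is granted, the reduction is immediate: the principal filter/ideal pair provides the required input, and the forward direction is a one-line observation about sublattices of power sets.
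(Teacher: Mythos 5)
Your proof is correct and follows essentially the same route as the paper, which simply derives the result from the prime ideal theorem for distributive lattices together with the preceding theorem on distinguishing sets of prime filters; you have merely filled in the standard details (the $\uparrow a$, $\downarrow b$ separation and the transfer of distributivity along the embedding). No gaps.
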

Henceforth, all lattices under consideration are bounded and distributive.
We now discuss representations preserving arbitrary meets and/or joins.

\begin{Def}[Meet-complete map]
A lattice map $f\colon L_1\to L_2$ is \emph{meet-complete} if for all $S\subseteq L_1$ where $\bw S$ exists in $L_1$ we have $f(\bw_{L_1} S)=\bw_{L_2} f[S]$.
\end{Def}

A similar definition is made for \emph{join-complete}. When a map is both \emph{meet-complete} and \emph{join-complete} we say it is \emph{complete}. When a bounded, distributive lattice has a meet-complete representation we say it is \emph{meet-completely representable}, and we make similar definitions for join-complete and complete representations. We shall call the class of all bounded, distributive lattices $\DL$, the class of all completely representable lattices $\CRL$, the classes of meet and join-completely representable lattices $\mCRL$ and $\jCRL$ respectively, and the class of lattices with both a meet-complete and a join-complete representation $\biCRL$. 

\begin{Thm}\label{duality}
A lattice $L$ has a meet-complete representation iff its order dual $L^{\delta}$ has a join-complete representation.
\end{Thm}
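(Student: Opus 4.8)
The plan is to prove Theorem~\ref{duality} by exploiting the duality between meets and joins under order reversal, together with the duality between set union and intersection under complementation in a power set. The statement is an ``iff'', but by symmetry it suffices to prove one direction: if $L$ has a meet-complete representation then $L^\delta$ has a join-complete one. Applying this implication to $L^\delta$ (and using $(L^\delta)^\delta = L$) then yields the converse.

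First I would fix notation. Recall that the order dual $L^\delta$ has the same underlying set as $L$, with $a \leq_{L^\delta} b$ iff $b \leq_L a$; consequently meets in $L$ become joins in $L^\delta$ and vice versa, so that $\bw_{L^\delta} S = \bv_L S$ whenever either side exists, and the bounds swap. On the other side, given a representation target $\sP(X)$, the key observation is that set-theoretic complementation $c\colon \sP(X) \to \sP(X)$, $c(A) = X \setminus A$, is an order-reversing bijection that interchanges $\bigcup$ and $\bigcap$: it sends arbitrary unions to arbitrary intersections and conversely. Thus $c$ is precisely an isomorphism between $\sP(X)$ and its own order dual, and it converts meet-preservation into join-preservation.

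The central step is then to take a meet-complete representation $h\colon L \to \sP(X)$ and define $h^\delta\colon L^\delta \to \sP(X)$ by $h^\delta(a) = X \setminus h(a) = c(h(a))$. I would verify in turn that (i) $h^\delta$ is an embedding, since $c$ is a bijection and $h$ is injective and order-reflecting, with the order reversal on the domain matched by the order reversal effected by $c$; (ii) $h^\delta$ is a lattice homomorphism from $L^\delta$ into $\sP(X)$, because a meet in $L^\delta$ is a join in $L$, which $h$ sends to a union, which $c$ turns into an intersection—exactly the meet in $\sP(X)$—and symmetrically for joins; and (iii) $h^\delta$ is join-complete, which is the crux: if $\bv_{L^\delta} S$ exists, it equals $\bw_L S$, so meet-completeness of $h$ gives $h(\bw_L S) = \bigcap_{a\in S} h(a)$, and applying $c$ yields $h^\delta(\bv_{L^\delta} S) = \bigcup_{a \in S} h^\delta(a)$, as required.

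I expect the main obstacle to be bookkeeping rather than conceptual difficulty: one must be careful that ``representation'' in the sense of this paper requires an embedding into $\sP(X)$ as a ring of sets, so I should confirm that $h^\delta$ really does land in $\sP(X)$ with the standard union/intersection operations (it does, since $c(h(a))$ is just another subset of $X$) and that the boundedness conditions are respected (the top and bottom of $L$ map under $h$ to $X$ and $\emptyset$, whose complements are $\emptyset$ and $X$, matching the swapped bounds of $L^\delta$). A minor point worth checking is irredundancy: if $h$ is irredundant then some point may fail to be covered by the complemented sets, but since the definition of representation only requires an embedding, irredundancy is an inessential convenience and can be dropped or restored by discarding uncovered points without affecting completeness. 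Once these verifications are assembled, the symmetric argument applied to $L^\delta$ closes the equivalence.
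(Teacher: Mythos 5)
Your proposal is correct and takes essentially the same approach as the paper: both define the dual representation by pointwise complementation, $a \mapsto X\setminus h(a)$, and use De Morgan to turn meet-completeness into join-completeness, with the converse following by applying the construction to $L^\delta$. The paper's proof is simply terser, leaving the embedding and homomorphism checks (which you spell out) implicit.
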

\begin{proof}
If $h\colon L\to \sP(P)$ is a representation, where $P$ is some distinguishing set of prime filters of $L$, then the map $\bar{h}\colon L^{\delta}\to \sP(P)$, $a\mapsto -h(a)$ is also a representation. If $h$ is meet-complete then by De Morgan $\bar{h}(\bv_{\delta} S) = -h(\bw S) = - \bigcap h[S] = -\bigcap -\bar{h}[S] = \bigcup \bar{h}[S]$ (here `$-$' denotes set theoretic complement).
\end{proof} 

\begin{Def}[Complete ideal/filter]
An ideal $I$ of a lattice $L$ is complete if whenever $\bv S$ exists in $L$ for $S\subseteq I$ then $\bv S\in I$. Similarly a filter $F$ of $L$ is complete if whenever $\bw T$ exists in $L$ for $T\subseteq F$ then $\bw T \in F$. 
\end{Def}

\begin{Def}[Completely-prime ideal/filter]
A prime ideal $I$ of $L$ is completely prime if whenever $\bw T\in I$ for some $T\subseteq L$ then $I\cap T \not= \emptyset$. Similarly, a prime filter $F$ of $L$ is completely prime if whenever $\bv S\in F$ for some $S\subseteq L$ then $F\cap S \neq \emptyset$.
\end{Def}

\begin{Lem}\label{ideal lemma}
If $F$ is a prime filter of $L$ and $I=L\setminus F$ is its prime ideal complement then $F$ is complete iff $I$ is completely prime, and $I$ is complete iff $F$ is completely prime. 
\end{Lem}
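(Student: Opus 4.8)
The plan is to prove each of the two biconditionals directly from the definitions, exploiting the fact that $I$ and $F$ partition $L$. The single observation driving everything is that, since $I = L \setminus F$, for any subset $T \subseteq L$ we have $T \subseteq F$ if and only if $T \cap I = \emptyset$, and dually $T \subseteq I$ if and only if $T \cap F = \emptyset$. Each of the four implications then reduces to a short contrapositive argument in which an existing meet or join is a single element of $L$ whose side of the partition is decided by these equivalences.

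For the first biconditional, I would argue as follows. Suppose $F$ is complete and let $T \subseteq L$ satisfy $\bw T \in I$. If $T \cap I = \emptyset$ then $T \subseteq F$, so completeness of $F$ forces $\bw T \in F$, contradicting $\bw T \in I$; hence $T \cap I \neq \emptyset$, so $I$ is completely prime. Conversely, if $I$ is completely prime and $T \subseteq F$ has a meet in $L$, then $\bw T \in I$ would give $T \cap I \neq \emptyset$, contradicting $T \subseteq F$; hence $\bw T \in F$ and $F$ is complete.

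The second biconditional is handled symmetrically, swapping the roles of meets and joins and of $F$ and $I$. If $I$ is complete and $\bv S \in F$ for some $S \subseteq L$, then $S \cap F = \emptyset$ would yield $S \subseteq I$ and hence $\bv S \in I$, a contradiction, so $S \cap F \neq \emptyset$ and $F$ is completely prime; the converse is the matching contrapositive, using that $S \subseteq I$ together with $\bv S \notin I$ would contradict complete primality of $F$.

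There is essentially no genuine obstacle here: the lemma is a formal consequence of complementation, and once the partition equivalences above are isolated the rest is mechanical. The only point requiring care is bookkeeping, namely tracking which lattice operation ($\bw$ or $\bv$) and which existence hypothesis attaches to which half of each biconditional, since ``complete'' for a filter refers to meets while ``completely prime'' for a filter refers to joins (and vice versa for ideals). Keeping this correspondence straight is what makes the symmetry between the two biconditionals exact.
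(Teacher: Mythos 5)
Your proof is correct and follows essentially the same route as the paper: the paper's (very terse) proof also just uses the partition $I = L\setminus F$ to rewrite the completeness of one side as the complete primality of the other, which is exactly the contrapositive bookkeeping you carry out explicitly. Your version merely spells out the four implications that the paper compresses into two sentences.
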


\begin{proof}
Using $I=L\setminus F$ we can rewrite the definition of completeness of $I$ as $\bv S \in F \implies F\cap S \not=\emptyset$. Similarly we can write completeness for $F$ as $\bw T \in I \implies T\cap I \not=\emptyset$.
\end{proof}

\begin{Thm}\label{main2}
Let $L$ be a bounded, distributive lattice. Then:
\begin{enumerate}
\item $L$ has a meet-complete representation iff $L$ has a distinguishing set of complete, prime filters,
\item $L$ has a join-complete representation iff $L$ has a distinguishing set of completely-prime filters,
\item $L$ has a complete representation iff $L$ has a distinguishing set of complete, completely-prime filters,
\end{enumerate}
\end{Thm}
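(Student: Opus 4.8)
The plan is to exploit the correspondence between representations and distinguishing sets of prime filters established in the preceding discussion, and to show that meet-completeness and join-completeness of a representation translate precisely into completeness and complete-primality of the associated inverse-image filters. Once this translation is in place, all three biconditionals fall out by passing back and forth between a representation $h$ and the family $\set{h^{-1}[x]\colon x\in X}$.

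First I would record the two inclusions that hold for \emph{any} representation $h\colon L\to\sP(X)$. Since $h$ preserves order, whenever $\bw S$ exists we have $h(\bw S)\subseteq\bigcap h[S]$, and whenever $\bv S$ exists we have $\bigcup h[S]\subseteq h(\bv S)$. Thus meet-completeness of $h$ is equivalent to the reverse inclusion $\bigcap h[S]\subseteq h(\bw S)$, and join-completeness to $h(\bv S)\subseteq\bigcup h[S]$, in each case quantified over all $S$ for which the relevant bound exists. I would then translate these into conditions on the filters $h^{-1}[x]$. Fixing $x\in X$ and $S\subseteq L$ with $\bw S$ existing, the statement $x\in\bigcap h[S]$ says exactly $S\subseteq h^{-1}[x]$, while $x\in h(\bw S)$ says $\bw S\in h^{-1}[x]$; hence $h$ is meet-complete iff every $h^{-1}[x]$ satisfies ``$S\subseteq h^{-1}[x]$ and $\bw S$ existing imply $\bw S\in h^{-1}[x]$'', that is, iff every $h^{-1}[x]$ is a \emph{complete} filter. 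Dually, $x\in h(\bv S)$ says $\bv S\in h^{-1}[x]$ and $x\in\bigcup h[S]$ says $h^{-1}[x]\cap S\neq\emptyset$, so $h$ is join-complete iff every $h^{-1}[x]$ is \emph{completely prime}. As a complete representation is by definition both meet- and join-complete, and these two conditions are imposed pointwise and independently on the filters $h^{-1}[x]$, a representation is complete iff every $h^{-1}[x]$ is simultaneously complete and completely prime.

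Finally I would assemble the three equivalences from this translation. For the forward direction of each, given a representation of the appropriate type I set $K=\set{h^{-1}[x]\colon x\in X}$; by irredundancy each member is a prime filter, $K$ is distinguishing because $h$ is an embedding, and each member carries the required completeness property by the translation above. For each converse, given a distinguishing set $K$ of prime filters of the stated kind I form $h_K$; since $h_K^{-1}[f]=f$ for every $f\in K$, the inverse-image filters of $h_K$ are exactly the members of $K$, so $h_K$ is meet-complete, join-complete, or complete accordingly. Parts (1), (2) and (3) then correspond to taking the filters in $K$ complete, completely prime, or both.

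The main obstacle is really just the careful bookkeeping in the translation step: making the equivalence between the set-theoretic inclusion and the filter-theoretic completeness condition airtight in both directions, and confirming for (3) that completeness of $h$ is the literal conjunction of the conditions for (1) and (2) rather than something strictly stronger. Everything else is a routine application of the representation/distinguishing-set correspondence and the identity $h_K^{-1}[f]=f$.
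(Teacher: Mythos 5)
Your proof is correct. For part (1) its content coincides with the paper's: the forward direction extracts the distinguishing family $\set{h^{-1}[x]\colon x\in X}$ and checks completeness of each filter, and the converse runs the chain of equivalences $p\in h_K(\bw S)\iff \bw S\in p\iff S\subseteq p\iff p\in\bigcap h_K[S]$. Where you genuinely diverge is in parts (2) and (3): the paper proves only part (1) directly and then derives (2) and (3) by appealing to theorem \ref{duality} (passing to the order dual $L^{\delta}$ via $a\mapsto -h(a)$) together with lemma \ref{ideal lemma} (complete prime ideals correspond under complementation to completely prime filters), whereas you prove a single pointwise translation lemma --- for a fixed irredundant representation $h$, meet-completeness of $h$ is equivalent to every $h^{-1}[x]$ being a complete filter, and join-completeness is equivalent to every $h^{-1}[x]$ being completely prime --- and read all three parts off from it. Your decomposition buys two things: it is self-contained (no dualization or filter/ideal complementation bookkeeping is needed), and it makes part (3) transparent, since completeness of a representation is by definition the conjunction of meet- and join-completeness, and your two translations are independent pointwise conditions on the \emph{same} family of inverse-image filters; this is exactly the point that distinguishes $\CRL$ from $\biCRL$, where the meet-complete and join-complete representations may differ. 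The paper's route is shorter on the page because it reuses machinery already established, at the cost of asking the reader to trace what a distinguishing set of complete prime filters of $L^{\delta}$ becomes back in $L$. Both arguments are sound, and both rely, as they must, on the standing irredundancy assumption and on the identity $h_K^{-1}[f]=f$.
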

\begin{proof}
We prove 1), the rest follows from theorem \ref{duality} and lemma \ref{ideal lemma}. For the left to right implication, let $h$ be an irredundant meet-complete representation of $L$ over some domain $D$.  Since $h$ is an irredundant representation, $\set{h^{-1}[d]\colon d\in D}$ is a distinguishing set of prime filters.  Also, if $S\subseteq h^{-1}[d]$ then $d\in \bigcap h[S]$ and by completeness of $h$ if $\bw S$ exists  then $d\in h(\bw S)$ so $\bw S\in h^{-1}[d]$, hence each of these prime filters is complete.  Conversely, let $K$ be a distinguishing set of complete, prime filters.  Define a map $h\colon L\to\wp(K)$ by $h(l)=\set{p\in K\colon l\in p}$.  Because $K$ is a distinguishing set of prime filters, $h$ is a representation.  Furthermore, since each $p\in K$ is complete, if $S\subseteq L$ and $\bw S$ exists then for all $p\in K,$
\begin{align*}
p\in h(\bw S)&\iff\bw S\in p\\
&\iff S\subseteq p\\
&\iff p\in \bigcap h[S]
\end{align*}
so $h(\bw S)=\bigcap h[S]$ and $h$ is a complete representation.

\end{proof}
In the light of lemma \ref{ideal lemma} it's straightforward to prove an analogous result to theorem \ref{main2} using ideals in place of filters.

We briefly turn our attention to the special case of Boolean algebras.  Recall that a bounded lattice $(L, 0, 1, \wedge, \vee)$  is \emph{complemented} iff for all $s\in L$ there is $s'\in L$ such that $s\vee s'=1$ and $s\wedge s'=0$. Since there can be at most one complement to an element,  we may write $-s$ instead of $s'$.

\begin{Lem}
If $L$ is complemented then its prime filters are precisely its ultrafilters, moreover the following are equivalent:
\begin{enumerate}
\item $U$ is a principal ultrafilter of $L$,
\item $U$ is a complete ultrafilter of $L$,
\item $U$ is a completely-prime ultrafilter of $L$.
\end{enumerate}
\end{Lem}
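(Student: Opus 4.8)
The plan is to treat the two assertions separately: first identify the prime filters with the ultrafilters, and then establish the equivalence of $(1)$, $(2)$, $(3)$ by running the cycle $(1)\Rightarrow(2)\Rightarrow(3)\Rightarrow(1)$. Throughout I would exploit that complementation makes $x\vee(-x)=1$ and $x\wedge(-x)=0$ available for every $x\in L$, and that $x\mapsto -x$ is an order-reversing involution, hence sends existing suprema to infima and vice versa (De Morgan).

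For the first assertion: if $F$ is a prime filter then, since $1=x\vee(-x)\in F$, primality gives $x\in F$ or $-x\in F$ for each $x$, while properness forbids both (their meet is $0$); so $F$ decides every complementary pair and is a maximal proper filter, i.e.\ an ultrafilter. Conversely, if $U$ is an ultrafilter and $a\vee b\in U$ with $a,b\notin U$, then $-a,-b\in U$, so $-(a\vee b)=(-a)\wedge(-b)\in U$ and $0\in U$, a contradiction; hence every ultrafilter is prime. For the easy implications of the cycle, $(1)\Rightarrow(2)$ holds because any principal filter $\uparrow a$ is complete: if $T\subseteq \uparrow a$ and $\bigwedge T$ exists then $a$ is a lower bound of $T$, so $\bigwedge T\geq a$ and $\bigwedge T\in \uparrow a$. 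For $(2)\Rightarrow(3)$ I would argue that if $\bigvee S\in U$ but $S\cap U=\emptyset$, then $-s\in U$ for every $s\in S$, the meet $\bigwedge\{-s:s\in S\}=-\bigvee S$ exists by De Morgan, and completeness of $U$ forces $-\bigvee S\in U$; together with $\bigvee S\in U$ this puts $0$ in $U$, a contradiction, so $U$ is completely-prime.

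The only real content is $(3)\Rightarrow(1)$, which I would prove in contrapositive form, and here I expect the main obstacle. The worry is that in a Boolean algebra which is not complete, the suprema and infima needed to witness a failure of complete-primality might simply fail to exist. The observation that dissolves this difficulty is that a non-principal ultrafilter automatically has infimum $0$: if $b$ is any lower bound of $U$ with $b\neq 0$, then $b\in U$ would make $b$ the least element of $U$ and force $U=\uparrow b$ to be principal, whereas $b\notin U$ gives $-b\in U$ and hence $b\leq -b$, so $b=b\wedge(-b)=0$; either way a contradiction, so $0$ is the only lower bound of $U$ and $\bigwedge U=0$ exists. Granting this, put $S=\{-u:u\in U\}$, which equals $L\setminus U$ since $U$ is an ultrafilter; then $\bigvee S=-\bigwedge U=1\in U$ by De Morgan, while $S\cap U=\emptyset$, so $U$ is not completely-prime. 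This proves that a non-principal ultrafilter is not completely-prime, closing the cycle.

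The same infimum computation settles the collateral facts directly: for non-principal $U$ the witness $\bigwedge U=0\notin U$ shows $U$ is not complete either, reproving $(2)\Rightarrow(1)$; alternatively one could route $(3)\Rightarrow(1)$ through Lemma \ref{ideal lemma} together with the complementation duality, under which completeness of the ideal $L\setminus U$ matches both completeness and complete-primality of $U$. I expect the write-up to be short, with essentially all the substance concentrated in the one-line proof that a non-principal ultrafilter has infimum $0$.
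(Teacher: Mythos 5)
Your proof is correct and rests on exactly the same ingredients as the paper's: the triviality of $(1)\Rightarrow(2)$, the infinite De Morgan law for complementation, and the observation that a lower bound of an ultrafilter lying outside it must be $0$ (the paper's parenthetical $s\leq -s\Rightarrow s=0$), which is precisely your claim that a non-principal ultrafilter has infimum $0$. The only difference is organizational: you close the cycle $(1)\Rightarrow(2)\Rightarrow(3)\Rightarrow(1)$, with $(3)\Rightarrow(1)$ handled by the explicit witness $S=L\setminus U$ whose join is $1$, whereas the paper proves the two equivalences $(2)\Leftrightarrow(1)$ and $(2)\Leftrightarrow(3)$ separately.
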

\begin{proof}
It's easy to see that the ultrafilters of a BA are precisely its prime filters. Clearly $1)\implies 2)$. Let $U$ be an ultrafilter. If $U$ is complete it must contain a non-zero lower bound $s$ and thus be principal (otherwise it would contain the complement of that lower bound, but $s\leq -s\Rightarrow s=0$),  so $2)\implies 1)$. For any $S\subseteq L$ we write $-S$ for $\set{-s\colon s\in S}$.
The infinite De Morgan law for Boolean algebras (see e.g. \cite[section~19]{Sik69}) gives $-\bv S=\bw-S$ so if $U$ is complete then $S\cap U=\emptyset \implies-\bv S \in U \implies \bv S\notin U$, so $2)\implies3)$. Similarly, if $U$ is completely-prime then $\bw S \notin U \implies -\bw S \in U \implies \bv -S \in U \implies -s \in U $ for\ some $s\in S\implies S\not \subseteq U$, so $3)\implies 2)$.
\end{proof}

We have as a corollary the following result (see \cite[corollary~6]{HirHod97} for the equivalence of the first two parts).

\begin{Cor}\label{BooCor}
For a Boolean algebra $B$ the following are equivalent:
\begin{enumerate}
\item $B$ is atomic,
\item $B$ is completely representable,
\item $B$ is meet-completely representable,
\item $B$ is join-completely representable.
\end{enumerate}
\end{Cor}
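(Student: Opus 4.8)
The plan is to reduce all three representability conditions to a single combinatorial statement about atoms, letting the preceding lemma and Theorem \ref{main2} do the heavy lifting. First I would invoke that lemma: in a Boolean algebra the prime filters are exactly the ultrafilters, and for an ultrafilter the properties of being complete, being completely-prime, and being both all collapse to being principal. Feeding this into the three clauses of Theorem \ref{main2}, each of conditions (2), (3) and (4) becomes equivalent to the \emph{same} assertion, namely that $B$ has a distinguishing set of principal ultrafilters. This already yields the equivalence of (2), (3) and (4) with no further work, so the only remaining task is to link this assertion to atomicity, i.e. to prove that $B$ is atomic iff $B$ has a distinguishing set of principal ultrafilters.

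For that link I would first record the elementary correspondence between principal ultrafilters and atoms: the principal filter $\set{b \in B : a \leq b}$ is an ultrafilter precisely when $a$ is an atom. Granting this, both implications are short. For the forward direction, assuming $B$ is atomic, I would show that the set of \emph{all} principal ultrafilters is already distinguishing: given $b \neq c$ we may assume $b \not\leq c$, so $b \wedge -c \neq 0$, and atomicity supplies an atom $a \leq b \wedge -c$; then $b$ lies in the principal ultrafilter generated by $a$ while $c$ does not (since $a \leq -c$ and $a \neq 0$ force $a \not\leq c$), so this ultrafilter separates $b$ from $c$.

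For the converse I would use the distinguishing set to witness atomicity directly. Given any $b \neq 0$, applying the distinguishing property to the pair $(b,0)$ produces some principal ultrafilter, generated by an atom $a$ say, separating the two; since $0$ lies in no proper filter, the only possibility is that $b$ belongs to it and $0$ does not, whence $a \leq b$. Thus every nonzero element dominates an atom and $B$ is atomic. I do not expect a genuine obstacle here: once the lemma has identified complete and completely-prime ultrafilters with principal ones, everything reduces to routine Boolean-algebra manipulation, the only point requiring minor care being the use of complementation to produce the separating atom in the forward direction.
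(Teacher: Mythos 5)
Your proposal is correct and follows exactly the route the paper intends: the preceding lemma collapses complete, completely-prime, and principal ultrafilters into one notion, Theorem \ref{main2} then makes conditions (2)--(4) all equivalent to having a distinguishing set of principal ultrafilters, and the atoms/principal-ultrafilters correspondence ties this to atomicity. The paper leaves these details implicit (citing Hirsch--Hodkinson for the equivalence of (1) and (2)), so your write-up is simply a careful filling-in of the same argument.
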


Turning our attention back to the lattice case we now give some examples to illustrate the relationships between the classes we have defined.
\begin{Ex}\label{exI}\emph{A distributive lattice both meet-completely representable and join-completely representable but not completely representable.}
Let $L=[0,1]\subseteq \R$. Then by taking $\{ [x, 1]\colon x\in L\}$ we obtain a distinguishing set of complete, prime filters, and by taking $\{ (x, 1]\colon x\in L\}$ we obtain a distinguishing set of completely-prime filters.\\
 However, if $F$ is a complete filter of $L$ then $\bw F\in F$ (by completeness properties of $L$ and $F$) and, since $\bw F=\bv \{x\in L\colon x<\bw F\}$, $F$ cannot be completely-prime.  
\end{Ex} 

\begin{Ex}\label{exII}\emph{A distributive lattice neither meet nor join-completely representable.}
In view of corollary \ref{BooCor} we can take any Boolean algebra that fails to be atomic.
\end{Ex}

\begin{Ex}\label{exIII}\emph{A distributive lattice join-completely representable but not meet-completely representable.}
Let $L$ be the lattice $(\overline\nats\times\overline\nats) \cup\set{0}$ shown in figure~\ref{fig:w2}, where $\overline\nats$ is the set of non-positive integers under the usual ordering and the element $0$ is a lower bound for the whole lattice. Then $L$ has no complete, prime filters, but all its filters are completely-prime, hence by theorem \ref{main2} it has a join-complete representation but no meet-complete representation.
\begin{figure}
\[
\xymatrix@dr{\bullet\ar@{-}[r]\ar@{-}[d]   &    \bullet\ar@{-}[r]\ar@{-}[d]    &    \bullet\ar@{-->}[r]\ar@{-}[d]     &    \\
\bullet\ar@{-}[r]\ar@{-}[d]   &   \bullet\ar@{-}[r]\ar@{-}[d]   &    \bullet\ar@{-->}[r]\ar@{-}[d]    &\\
\bullet\ar@{-}[r]\ar@{-->}[d]&\bullet\ar@{-}[r]\ar@{-->}[d]&\bullet\ar@{-->}[r]\ar@{-->}[d]   &\\
&&&\bullet 0
}
\]
\caption{\label{fig:w2}The lattice  $(\overline\nats\times\overline\nats) \cup\set{0}$}
\end{figure}
\end{Ex}

Examples \ref{exI}, \ref{exII} and \ref{exIII} (and its dual) give us the following:

\begin{equation}\tag{$\dag$} \label{dagger} \CRL\subset \biCRL=\mCRL\cap\jCRL\subset \mCRL\neq\jCRL \subset \DL \end{equation}

There is a relationship between the existence of types of complete representation and the join and meet-densities of the sets of join and meet-irreducibles of $L$, which we make precise in the following proposition.

\begin{Prop}\label{irred}
Let $L$ be a bounded, distributive lattice. Define $J(L)$ and $M(L)$ to be the sets of join-irreducible and meet-irreducible elements of $L$ respectively, and $J^\infty_p(L)$ and $M^\infty_p(L)$ to be the sets of completely join/ meet-primes of $L$ respectively, then:
\begin{enumerate}
\item If the set $J(L)$ is join-dense in $L$ then $L$ has a meet-complete representation, dually if the set $M(L)$ is meet-dense in $L$ then $L$ has a join-complete representation. When $L$ is complete then if $L$ has a meet/join-complete representation the sets $J(L)/M(L)$ are join/meet-dense in $L$. 
\item If either $J^\infty_p(L)$ is join-dense in $L$ or $M^\infty_p(L)$ is meet-dense in $L$ then $L$ has a complete representation. When $L$ is complete it is also true that whenever $L$ has a complete representation $J^\infty_p(L)$ and $M^\infty_p(L)$ are join and meet-dense in $L$ respectively. 
\end{enumerate}
\end{Prop}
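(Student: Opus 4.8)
The plan is to route everything through the filter characterisations of Theorem \ref{main2} together with the order-duality of Theorem \ref{duality}, so that in each part only the ``irreducible $\Rightarrow$ representation'' half for the join side needs a direct argument and the meet-dense/$M$-analogue comes for free. For the first implication of part 1 I would send each join-irreducible $j\in J(L)$ to its principal up-set $\uparrow j=\{a\in L: j\leq a\}$. Since $L$ is distributive, join-irreducibility coincides with join-primality, so $\uparrow j$ is a prime filter; being principal it is automatically complete, as any existing meet of elements above $j$ again lies above $j$. If $J(L)$ is join-dense then $\{\uparrow j: j\in J(L)\}$ is distinguishing: given $a\neq b$, say $a\not\leq b$, join-density forces some $j\in J(L)$ with $j\leq a$ but $j\not\leq b$ (else $a=\bigvee\{j\in J(L): j\leq a\}\leq b$), and this $\uparrow j$ separates $a$ from $b$. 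Theorem \ref{main2}(1) then yields a meet-complete representation, and the dual claim for meet-density of $M(L)$ follows by applying this to $L^{\delta}$ and invoking Theorem \ref{duality}.

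For the converse half of part 1 I would use completeness of $L$ to pin down the shape of the separating filters. A meet-complete representation supplies, via Theorem \ref{main2}(1), a distinguishing set $K$ of complete prime filters. For $p\in K$, completeness of $L$ makes $m_p=\bigwedge p$ exist, and completeness of the filter puts $m_p\in p$; hence $p=\uparrow m_p$ is principal and nonzero, and primality of $p$ makes $m_p$ join-prime, so $m_p\in J(L)$. Now fix $a\in L$ and set $a'=\bigvee\{j\in J(L): j\leq a\}$, which exists as $L$ is complete. If $a'<a$, the distinguishing property picks $p\in K$ with $a\in p$ and $a'\notin p$; but $m_p\leq a$ with $m_p\in J(L)$ gives $m_p\leq a'$, whence $a'\in\uparrow m_p=p$, a contradiction. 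So $a=a'$ and $J(L)$ is join-dense, the $M(L)$-statement again being dual.

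Part 2 follows the same template but with completely-prime filters and Theorem \ref{main2}(3). The converse direction, for complete $L$, is clean: a complete representation gives a distinguishing set of complete, completely-prime filters, each principal $\uparrow m_p$ by the argument above; a completely-prime principal filter forces its generator to be completely join-prime, and completely join-prime elements are in particular completely join-irreducible, so $m_p\in J^\infty(L)$. The density argument of the previous paragraph, with $J(L)$ replaced by $J^\infty(L)$, then shows $J^\infty(L)$ is join-dense, and dually for $M^\infty(L)$.

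The forward direction of part 2 is where the real work lies. Given $j\in J^\infty(L)$ I again want to use $\uparrow j$, which is complete and prime, but now I must show it is \emph{completely-prime}, i.e.\ that $j\leq\bigvee S$ forces $j\leq s$ for some $s\in S$. This is the main obstacle, since, unlike primality, complete primality does not drop out of finite distributivity: the naive identity $j=j\wedge\bigvee S=\bigvee_{s\in S}(j\wedge s)$ requires an infinite distributive law that a general distributive lattice need not satisfy. My plan is to exploit the defining feature of a completely join-irreducible element, its unique lower cover $j^-$ with $x<j\Rightarrow x\leq j^-$, to attack the bad case in which $j\not\leq s$, hence $j\wedge s\leq j^-$, for every $s\in S$, and to determine precisely what extra leverage (beyond finite distributivity, and drawing on the join-density hypothesis itself) is needed to derive the contradiction $j\leq j^-$. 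Isolating exactly the condition under which $\uparrow j$ is completely-prime is the crux of part 2; once it is in hand, Theorem \ref{main2}(3) delivers the complete representation and Theorem \ref{duality} supplies the $M^\infty(L)$ half.
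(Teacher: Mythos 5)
Your treatment of part 1 (both directions) and of the converse direction of part 2 is correct and is exactly the paper's argument, carried out in more detail: the paper likewise takes the principal filters (dually, ideals) generated by the relevant irreducible elements for the forward implications, and for the converses notes that, when $L$ is complete, each complete prime (respectively complete, completely-prime) filter in a distinguishing set is principal with join-irreducible (respectively completely join-irreducible) generator; your density argument then finishes exactly as the paper intends.

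The genuine gap is the forward direction of part 2, which you leave unproven: you never establish that $\uparrow j$ is completely prime for $j\in J^\infty(L)$, only announce a plan to find the ``extra leverage'' needed. Be aware that the paper offers nothing here either --- it simply asserts that these principal filters are completely prime --- and your suspicion about this step is justified in the strongest sense: it cannot be repaired from join-density alone, because the implication as stated is false for the standard notion of complete join-irreducibility. Take $X=\nats\cup\{\infty\}$ and let $L\subseteq\sP(X)$ consist of $\emptyset$, $\{\infty\}$, the sets $[0,n]=\{0,\dots,n\}$ and $[0,n]\cup\{\infty\}$ for all $n\in\nats$, and $X$ itself; this is a bounded ring of sets. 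The only element of $L$ containing $\nats$ is $X$, so $\bigvee_n[0,n]=X$ in $L$. One checks that $J^\infty(L)=\{[0,n]\colon n\in\nats\}\cup\{\{\infty\}\}$ and that this set is join-dense ($[0,n]\cup\{\infty\}=[0,n]\vee\{\infty\}$ and $X=\bigvee_n[0,n]$). Yet $\{\infty\}\le\bigvee_n[0,n]$ while $\{\infty\}\not\le[0,n]$ for every $n$, so $\uparrow\{\infty\}$ is not completely prime. Worse, the prime filters of $L$ are exactly the $\uparrow[0,n]$, $\uparrow\{\infty\}$ and $\{X\}$, and the join $\bigvee_n[0,n]=X$ shows the latter two are not completely prime; since no $\uparrow[0,n]$ separates $\{\infty\}$ from $\emptyset$, theorem \ref{main2} shows $L$ is not even join-completely representable, despite $J^\infty(L)$ being join-dense. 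So the correct resolution of your ``crux'' is not a cleverer argument but a change of hypothesis: the implication holds --- by exactly your principal-filter argument, with no further work --- if $J^\infty(L)$ is taken to be the set of completely join-\emph{prime} elements, a reading consistent with how the paper later uses the proposition (in complete, completely distributive lattices, where the two notions coincide).
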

\begin{proof}

For the first part of 1, we just take the sets of principal filters/ideals generated by the join/meet-irreducibles respectively, for the second we note that the generator of each filter/ideal must be join/meet-irreducible. For the first part of 2 we note that if we take the sets of principal filters/ideals generated by $J^\infty_p(L)$ and $M^\infty_p(L)$ respectively we obtain distinguishing sets of completely-prime filters/ideals, and for the second part the generator of each filter/ideal will be completely join/meet-irreducible.
\end{proof}

Note that the full converses to proposition \ref{irred} (i.e. when $L$ is not complete) do not hold, so e.g. in a completely representable lattice $L$, $J^{\infty}_p(L)$ need not be join-dense, as the following example illustrates.

\begin{Ex} \label{ex2} $L$ is the lattice with domain $(\overline\nats\times\overline\nats)\cup\nats$ as shown in figure~\ref{fig:w3}, where $\overline\nats$ is the set of  non-positive integers under their usual ordering and each element of $\nats$ is less than each element of $(\overline\nats\times\overline\nats)$.  For $-n\in \overline\nats$, the set $[-n, 0]\times\overline\nats$ is a complete, completely-prime filter (with no infimum) and similarly $\overline\nats\times [-n, 0]$ is also complete, completely-prime.
Hence $L$ has a distinguishing set of complete, completely-prime filters but $J^{\infty}_p(L)=J(L)=\nats$ is not join dense in $L$.   

\begin{figure}
\[\begin{array}{c}
\xymatrix@dr{\bullet\ar@{-}[r]\ar@{-}[d]   &    \bullet\ar@{-}[r]\ar@{-}[d]    &    \bullet\ar@{-->}[r]\ar@{-}[d]     &    \\
\bullet\ar@{-}[r]\ar@{-}[d]   &   \bullet\ar@{-}[r]\ar@{-}[d]   &    \bullet\ar@{-->}[r]\ar@{-}[d]    &\\
\bullet\ar@{-}[r]\ar@{-->}[d]&\bullet\ar@{-}[r]\ar@{-->}[d]&\bullet\ar@{-->}[r]\ar@{-->}[d]   &\\
&&&
}   
\\  
\xymatrix{ \; \\
\bullet\ar@{-->}[u]\\
\bullet\ar@{-}[u]\\
\bullet\ar@{-}[u]
}
\end{array}
\]
\caption{\label{fig:w3}The lattice  $(\overline\nats\times\overline\nats) \cup \nats$}
\end{figure}
\end{Ex}

We end this section with a note about canonical extensions.
\begin{Lem}
A complete lattice $L$ is completely representable if and only if it is doubly algebraic (a complete lattice is algebraic if every element can be written as a join of compact elements, a complete lattice is doubly algebraic if both it and its order dual are algebraic).
\end{Lem}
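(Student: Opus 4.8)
The plan is to route everything through the characterisation of complete representability by complete, completely-prime filters (theorem \ref{main2}(3)) and to convert that filter condition into a density statement about irreducible elements. The first step I would record is an elementary observation valid because $L$ is complete: a filter $F$ that is both complete and completely-prime must be principal, since completeness gives $\bigwedge F\in F$, whence $F=\mathord{\uparrow}p$ with $p=\bigwedge F$. Conversely every principal filter $\mathord{\uparrow}p$ is automatically complete, and it is completely-prime precisely when $p$ is \emph{completely join-prime} (i.e.\ $p\le\bigvee S$ forces $p\le s$ for some $s\in S$). Since a set of principal filters $\{\mathord{\uparrow}p\}$ is distinguishing iff the elements $p$ are join-dense, theorem \ref{main2}(3) becomes: for complete $L$, one has $L\in\CRL$ iff the completely join-prime elements are join-dense in $L$.

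For the implication $L\in\CRL\Rightarrow L$ doubly algebraic I would use that a completely join-prime element is in particular compact. If such elements are join-dense, then every element is a join of compact elements, so $L$ is algebraic. Using theorem \ref{duality} in its complete form ($L\in\CRL$ iff $L^\delta\in\CRL$, since complete means meet- and join-complete) and repeating the same argument inside $L^\delta$ shows that $L^\delta$ is algebraic as well, i.e.\ $L$ is doubly algebraic.

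For the converse, $L$ doubly algebraic $\Rightarrow L\in\CRL$, I would first upgrade irreducibles to primes on the algebraic side. An algebraic lattice is meet-continuous, so a distributive algebraic lattice is a frame; in a frame every completely join-irreducible $j$ is completely join-prime, because $j=j\wedge\bigvee S=\bigvee_{s}(j\wedge s)$ and irreducibility force $j=j\wedge s$, hence $j\le s$, for some $s$. It therefore suffices to manufacture enough completely join-irreducible elements, and this is where dual algebraicity is used. The key lemma is that in an algebraic lattice the completely meet-irreducible elements are meet-dense; dualising it and applying it to $L^\delta$ (which is algebraic) shows that the completely join-irreducibles of $L$ are join-dense. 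Combined with the frame step, the completely join-prime elements of $L$ are then join-dense, and by the first paragraph $L\in\CRL$.

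The main obstacle is the key lemma (spatiality of dually algebraic lattices), which I would prove by Zorn's lemma. Given $b\not\le a$, algebraicity supplies a compact $k\le b$ with $k\not\le a$; consider the poset $\{x:\ x\ge a,\ k\not\le x\}$, which contains $a$. Compactness of $k$ ensures that a chain in this set has its join in the set, since a directed join cannot reach $k$ unless some member already does, so Zorn yields a maximal element $m$. Maximality forces $m$ to be completely meet-irreducible: if $\bigwedge T=m$ with $m\notin T$ then every $t\in T$ lies strictly above $m$, hence satisfies $k\le t$, giving $k\le\bigwedge T=m$, a contradiction. Finally $k\le b$ and $k\not\le m$ give $b\not\le m$ while $m\ge a$, so $m$ separates $a$ from $b$, proving meet-density. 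The only remaining routine points are the standard fact that algebraic lattices are meet-continuous and the bookkeeping of passing to the order dual $L^\delta$.
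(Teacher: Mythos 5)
Your proof is correct, but it follows a genuinely different route from the paper's. The paper disposes of the lemma in a few lines by citing the characterisation from \cite{CrawDil73} (a lattice is doubly algebraic iff it is complete, completely distributive, and $J^\infty(L)$, $M^\infty(L)$ are join-/meet-dense) and then invoking proposition \ref{irred} in both directions, with complete distributivity inherited from the representation. You bypass both the citation and proposition \ref{irred}, working instead from theorem \ref{main2}(3) via the crisp observation that in a complete lattice the complete, completely-prime filters are exactly the principal filters ${\uparrow}p$ with $p$ completely join-prime, so that for complete $L$ membership in $\CRL$ is equivalent to join-density of the completely join-prime elements. Your forward direction (completely join-prime implies compact) targets the definition of doubly algebraic directly, so you never need complete distributivity; your converse re-proves from scratch the two facts that do the real work: the frame law in a distributive algebraic lattice, which upgrades completely join-irreducible to completely join-prime, and the Zorn/compactness argument that completely meet-irreducibles are meet-dense in any algebraic lattice, applied to $L^\delta$. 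The paper's approach buys brevity at the cost of resting on an external structure theorem; yours buys a self-contained argument that in effect re-proves the relevant half of the Crawley--Dilworth characterisation, and it makes explicit the passage from completely join-irreducible to completely join-prime, a point that the proof of proposition \ref{irred} leaves implicit. Two conventions deserve a sentence in a final write-up: a completely join-prime element is nonzero (take $S=\emptyset$), so ${\uparrow}p$ is indeed a proper prime filter; and the ``complete'' form of theorem \ref{duality} that you invoke ($L\in\CRL$ iff $L^\delta\in\CRL$) holds by the same De Morgan computation given in the paper.
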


\begin{proof}
It is known, see e.g. \cite{CrawDil73}, that a lattice $L$ is doubly algebraic if and only if it is complete, completely distributive, and the completely join/meet-irreducibles $J^\infty(L)$  and $M^\infty(L)$ are join/meet-dense respectively. When $L$ is completely representable it inherits complete distributivity from its representation and, by proposition \ref{irred}, has the required density properties. Conversely, by the same proposition, the density properties of algebraicity and dual algebraicity are both sufficient conditions for complete representability.
\end{proof}  

\begin{Cor}
The canonical extension of any bounded distributive lattice is completely representable.
\end{Cor}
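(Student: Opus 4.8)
The plan is to derive the corollary straight from the preceding lemma, which identifies the completely representable complete lattices with the doubly algebraic ones. The canonical extension $L^\sigma$ of a bounded distributive lattice $L$ is, by construction, a complete lattice, and it is again distributive (distributivity is preserved by canonical extension). So the entire problem reduces to checking that $L^\sigma$ is doubly algebraic.

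To do this I would invoke the structure theory of canonical extensions of distributive lattices. Concretely $L^\sigma$ is isomorphic to the lattice $\mathrm{Up}(X)$ of all up-sets of the poset $X$ of completely join-irreducible elements of $L^\sigma$ (equivalently, the prime filters of $L$ ordered by inclusion), the embedding sending $a\in L$ to $\{F\in X : a\in F\}$; this is the Priestley dual with its topology forgotten, and the density and compactness of the embedding are exactly the defining features of the canonical extension. It then suffices to observe that any up-set lattice $\mathrm{Up}(X)$ is doubly algebraic. Each principal up-set ${\uparrow}x$ is compact, and it is completely join-irreducible because the union of all up-sets strictly below it is ${\uparrow}x\setminus\{x\}$, which is still an up-set and is strictly smaller; since every up-set $U$ equals $\bigcup_{x\in U}{\uparrow}x$, the principal up-sets are join-dense and $\mathrm{Up}(X)$ is algebraic. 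The order dual of $\mathrm{Up}(X)$ is the up-set lattice of the reversed poset, so the same argument makes it algebraic too; hence $\mathrm{Up}(X)$, and therefore $L^\sigma$, is doubly algebraic, and the lemma gives complete representability. (Equivalently, these facts say $L^\sigma$ is complete, completely distributive, and has its completely join/meet-irreducibles join/meet-dense, which is the characterisation of doubly algebraic lattices already used via \cite{CrawDil73}.)

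The only real work is the structural input of the first step: identifying $L^\sigma$ with an up-set lattice, equivalently that the canonical extension of a distributive lattice is completely distributive with its completely join- and meet-irreducibles join- and meet-dense. This is the substantive fact imported from canonical extension theory; once it is granted, double algebraicity and the appeal to the lemma are routine. As an alternative that sidesteps the lemma, knowing merely that $J^\infty(L^\sigma)$ is join-dense in $L^\sigma$ already lets one apply part 2 of proposition \ref{irred} to obtain a complete representation directly.
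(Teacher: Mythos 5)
Your proof is correct and takes essentially the same route as the paper: the paper's own proof is a one-liner observing that, by \cite[theorem~2.5]{GehJon04}, the canonical extension $L^\sigma$ can be \emph{defined} as a doubly algebraic lattice into which $L$ embeds, whereupon the preceding lemma applies. You simply unpack that citation, verifying double algebraicity directly from the identification of $L^\sigma$ with the up-set lattice of the prime filter poset, which is a correct (and more self-contained) elaboration of the same argument rather than a different one.
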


\begin{proof}
The canonical extension $L^\sigma$ of a bounded distributive lattice $L$ can be defined as a doubly algebraic lattice into which $L$ embeds in a certain way (see e.g. \cite[theorem~2.5]{GehJon04}  for details).
\end{proof}

\end{section}

\begin{section}{HSP, elementarity and pseudo-elementarity} \label{SECpseud}
Since a subalgebra of an atomic Boolean algebra need not be atomic we know that none of the classes in \eqref{dagger} is closed under subalgebras, and thus cannot be varieties, or even quasi-varieties. Similarly, given an atomic Boolean algebra $B$ we can define an equivalence relation $R$ on $B$ by $xRy\iff |\{a\in At(B)\colon a\leq x\}\triangle \{a\in At(B)\colon a\leq y\}|<|\omega|$, that is, if and only if the symmetric difference of the sets of atoms beneath each element is finite. It can easily be shown that $R$ is a congruence, and in the case where $B$ is the complete, atomic Boolean algebra on $\omega$ generators the resulting $\frac{B}{R}$ is isomorphic to the countable atomless Boolean algebra, and thus none of classes in \eqref{dagger} can be closed under homomorphic images. We can say something positive about closure under direct products, which we express in the following lemma:
\begin{Lem}
The classes in \eqref{dagger} are all closed under taking direct products.
\end{Lem}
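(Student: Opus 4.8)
The plan is to work entirely through the filter characterisations of theorem \ref{main2}, reducing closure under products to the construction of a suitable distinguishing set of prime filters on the product out of such sets on the factors. Fix a family $(L_i)_{i\in I}$ with product $L=\prod_{i\in I}L_i$, and let $\pi_j\colon L\to L_j$ denote the $j$-th projection. The basic move is to pull prime filters back along projections: given a prime filter $p$ of $L_j$, set $\pi_j^{-1}[p]=\set{a\in L\colon \pi_j(a)\in p}$. Since $\pi_j$ is a surjective lattice homomorphism and $p$ is a proper prime filter, $\pi_j^{-1}[p]$ is again a proper prime filter of $L$. If $K_j$ is a distinguishing set of prime filters of $L_j$ for each $j$, I would take $K=\bigcup_{j\in I}\set{\pi_j^{-1}[p]\colon p\in K_j}$; this is distinguishing over $L$, because any $a\neq b$ in $L$ differ in some coordinate $j$, where some $p\in K_j$ separates $\pi_j(a)$ from $\pi_j(b)$.

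The technical heart of the argument is that each projection $\pi_j$ preserves all \emph{existing} meets and joins. This follows from an adjunction: the map sending $x\in L_j$ to the tuple that is $x$ in coordinate $j$ and $0$ in all others is left adjoint to $\pi_j$, while the map sending $x$ to the tuple that is $x$ in coordinate $j$ and $1$ elsewhere is right adjoint to it; a monotone map with a left (resp.\ right) adjoint preserves existing meets (resp.\ joins). Consequently, whenever $\bw S$ or $\bv S$ exists in $L$ we have $\pi_j(\bw S)=\bw\pi_j[S]$ and $\pi_j(\bv S)=\bv\pi_j[S]$, with the right-hand meets and joins existing in $L_j$. (Equivalently, one verifies directly that meets and joins in a product are computed coordinatewise.)

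With this in hand the completeness properties transfer coordinatewise. If each $p\in K_j$ is complete and $T\subseteq\pi_j^{-1}[p]$ has a meet in $L$, then $\pi_j[T]\subseteq p$ and $\bw\pi_j[T]=\pi_j(\bw T)$ exists in $L_j$, so completeness of $p$ gives $\pi_j(\bw T)\in p$, i.e.\ $\bw T\in\pi_j^{-1}[p]$; hence $\pi_j^{-1}[p]$ is complete. Dually, if each $p\in K_j$ is completely-prime and $\bv S\in\pi_j^{-1}[p]$, then $\bv\pi_j[S]=\pi_j(\bv S)\in p$, so by complete-primality some $\pi_j(s)\in p$, giving $s\in\pi_j^{-1}[p]\cap S$; hence $\pi_j^{-1}[p]$ is completely-prime. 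Feeding the resulting distinguishing sets into theorem \ref{main2} yields closure of $\mCRL$, $\jCRL$ and $\CRL$ under products; closure of $\biCRL$ is then immediate from $\biCRL=\mCRL\cap\jCRL$, since the product lies in both $\mCRL$ and $\jCRL$.

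I expect the only real obstacle to be the meet/join-preservation by the projections; the rest is bookkeeping driven by the definitions of complete and completely-prime filters. The point to watch is that the adjunction (or coordinatewise) argument genuinely applies to arbitrary existing infima and suprema, not merely finite ones, and that properness of the pulled-back filters is retained: both hold because the factors are bounded and the projections surjective.
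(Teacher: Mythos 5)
Your proof is correct and takes essentially the same approach as the paper: the paper's filter $S=\prod_I S_i$ (with $S_j=\gamma$ and $S_i=L_i$ otherwise) is exactly your pullback $\pi_j^{-1}[\gamma]$, and both arguments reduce closure to the filter characterisations of theorem \ref{main2}. Your adjunction argument simply makes explicit the coordinatewise computation of existing meets and joins that the paper leaves implicit in asserting $S$ is a complete, prime filter.
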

\begin{proof}
We do the proof for $\mCRL$, the others are similar. Suppose $\{L_i\}_I$ is a family of lattices in $\mCRL$. Let $f\neq g \in \prod_I L_i$, then we can choose $j\in I$ with $f(j)\neq g(j)$, and by the assumption of meet-complete representability there is a complete, prime filter $\gamma$ distinguishing $f(j)$ and $g(i)$. Define sets $S_i\subseteq L_i$ by $S_j=\gamma$ and $S_i=L_i$ for all $i\neq j$, then $S=\prod_I S_i$ is a complete, prime filter distinguishing $f$ and $g$. 
\end{proof}
 
As `being atomic' is a first order property for Boolean algebras, it follows immediately from corollary \ref{BooCor} that the class of completely representable Boolean algebras is elementary. The aim here is to investigate to what extent similar results hold for the classes in \eqref{dagger}. Our first result is negative:

\begin{Thm}\label{not elem}
\textbf{CRL} is not closed under elementary equivalence.
\end{Thm}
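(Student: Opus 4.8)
The plan is to exploit the characterisation in Theorem \ref{main2}: membership in $\CRL$ is equivalent to the existence of a distinguishing set of complete, completely-prime filters, a condition that is patently second-order. To show non-closure under elementary equivalence I would exhibit two elementarily equivalent bounded distributive lattices, exactly one of which lies in $\CRL$. The natural candidates are the two chains $C_\R = [0,1]\cap\R$ and $C_\Q = [0,1]\cap\Q$, each regarded as a bounded distributive lattice under $\min$ and $\max$.

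First I would record that $C_\R \notin \CRL$, which is already the content of Example \ref{exI}: every proper complete filter of $[0,1]$ has the form $[m,1]$ with $m>0$, and it fails to be completely-prime because $m=\bw F=\bv\{x:x<m\}$ while no $x<m$ belongs to $F$. Hence $C_\R$ has no proper complete, completely-prime filter at all, and in particular no distinguishing family of them.

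Next I would verify $C_\Q\in\CRL$ by producing the required filters. For each irrational $\alpha\in(0,1)$ put $F_\alpha=\{q\in C_\Q:q>\alpha\}$. As $C_\Q$ is a chain, each $F_\alpha$ is automatically a prime filter, and it is clearly proper and nonempty. Both completeness and complete-primality then reduce to the irrationality of $\alpha$. For completeness: if $T\subseteq F_\alpha$ and $\bw T$ exists in $C_\Q$, its value is a rational greatest lower bound, hence $\neq\alpha$ and so $>\alpha$, putting $\bw T\in F_\alpha$; the point is that a sequence of rationals decreasing to $\alpha$ has no infimum in $C_\Q$, so no troublesome $T$ arises. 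Dually, if $\bv S=\gamma\in F_\alpha$, then $\gamma$ equals the real supremum of $S$, is rational, and exceeds $\alpha$, which forces some element of $S$ to exceed $\alpha$, giving $S\cap F_\alpha\neq\emptyset$. Finally the family $\{F_\alpha\}$ is distinguishing: for rationals $q<q'$ any irrational $\alpha\in(q,q')$ separates them. By Theorem \ref{main2}, $C_\Q\in\CRL$.

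It remains to establish $C_\Q\equiv C_\R$, the one genuinely soft ingredient. Both are infinite dense linear orders with distinct least and greatest elements, hence models of the complete, $\aleph_0$-categorical theory of dense linear orders with endpoints, so they are elementarily equivalent as orders. Since on any chain the order and the lattice operations are quantifier-free interdefinable by the same formulas ($x\le y\iff x\wedge y=x$, and $x\wedge y$, $x\vee y$ are the $\le$-least and $\le$-greatest of $x,y$), this equivalence transfers to elementary equivalence as lattices. Thus $C_\Q\equiv C_\R$ with $C_\Q\in\CRL$ and $C_\R\notin\CRL$, establishing the theorem. I expect the main obstacle to be the verification that $C_\Q$ is completely representable, i.e.\ that the $F_\alpha$ are simultaneously complete and completely-prime: completeness and complete-primality pull in opposite directions (Example \ref{exI} shows they clash in $C_\R$), and it is precisely the absence in $\Q$ of the relevant suprema and infima—secured by placing each cut at an irrational point—that allows both conditions to hold at once.
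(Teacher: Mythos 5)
Your proposal is correct and follows essentially the same route as the paper: the same pair $[0,1]\cap\R\notin\CRL$ (via Example \ref{exI}) and $[0,1]\cap\Q\in\CRL$ (via the filters $\{q:q>\alpha\}$ at irrational cuts $\alpha$), with elementary equivalence coming from the completeness of the theory of dense linear orders with endpoints. Your write-up merely supplies details the paper leaves implicit, notably the careful check that existing infima/suprema in $[0,1]\cap\Q$ coincide with the real ones and the order-to-lattice interdefinability step.
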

\begin{proof}
The lattice $L=[0,1]\subseteq \R$ from example \ref{exI} is not in \textbf{CRL}, however the lattice $L'=[0,1]\cap \Q$ \emph{is} in $\CRL$ as for every irrational $r$ the set $\{ a\in L'\colon a> r\}$ is a complete, completely-prime filter. $L$ and $L'$ are elementarily equivalent as $\R$ and $\Q$ are.
\end{proof}
We can, however, show that all the classes in \eqref{dagger} are at least \emph{pseudo-elementary}. In particular we shall demonstrate that $\mCRL$ is precisely the first order reduct of the class of models of a theory in two-sorted FOL, and thus is pseudo-elementary (the proof can be readily adapted for the other classes). We proceed as follows.

\begin{Def}\label{PEclass}\emph{(Pseudo-elementary class)} Given a first order signature $\sL$, a class $\sC$ of $\sL$ structures is pseudo-elementary if there are
\begin{enumerate}
\item a two-sorted language $\sL^+$, with disjoint sorts $\bL$ and $\bS$, containing $\bL$-sorted copies of all symbols of $\sL$, and
\item an $\sL^+$ theory $U$  
\end{enumerate}
with $\sC=\{M^{\bL}\upharpoonright_{\sL}\colon M\models U \}$, where $M^{\bL}$ is a structure in the sublanguage of $\sL^+$ containing only $\bL$-sorted symbols whose domain contains only $\bL$-sorted elements, $M^{\bL}\upharpoonright_{\sL}$ is the $\sL$ reduct of $M^{\bL}$ obtained easily by identifying the symbols of $\sL$ with their $\bL$-sorted counterparts in $\sL^+$, and $\{M^{\bL}\upharpoonright_{\sL}\colon M\models U \}$ being thus the class of $\bL$-sort $\sL$ reducts of models of $U$. 
\end{Def}
See \cite[section~9]{HirHod02}  for more information on this definition, and for proof of its equivalence with single-sorted definitions of pseudo-elementarity. 

Now, let $\sL=\{+,\cdot,0,1\}$ be the language of bounded, distributive lattices in FOL. Define the two-sorted language $\sL^+=\sL\cup\{\in\}$, where $\in$ is a binary predicate whose first argument takes variables of the $\bL$ sort and whose second takes variables of the $\bS$ sort. Let the original functions of $\sL$ be wholly $\bL$-sorted in $\sL^+$ (the $\bL$ sort is meant to represent lattice elements and the $\bS$ sort sets of these elements). Define binary $\bL$-sorted predicates $\leq$ and $\geq$, and binary $\bS$-sorted predicate $\subseteq$ in $\sL^+$ in the obvious way. For simplicity we will write $\xL\in \sS$ for $\in(\xL,\sS)$, and similar for $\leq$, $\subseteq$ etc.\\ 

Define additional predicates $P$, $I$ and $C$ as follows:
\begin{itemize}
\item
 $P(\sS)$ if and only if each of the following properties hold:
\begin{enumerate}
\item $\forall \xL \yL\Big(\big((\xL\in\sS)\wedge (\yL\geq \xL)\big)\rightarrow (\yL\in\sS)\Big)$
\item $\forall\xL\yL\Big(\big((\xL\in\sS) \wedge (\yL\in\sS)\big)\rightarrow (\xL\cdot \yL \in \sS)\Big)$
\item $\forall\xL\yL\Big((\xL+\yL\in\sS)\rightarrow \big((\xL\in \sS)\vee (\yL\in\sS)\big)\Big) $
\end{enumerate}
$P$ is meant to capture the property of being a prime filter.\\
\item
 $I(\xL,\sS)$ if and only if 
\[\forall \yL\Big((\yL \in \sS) \rightarrow (\xL\leq\yL)\Big)\wedge \forall\zL \Big(\big((\yL \in \sS) \rightarrow (\zL\leq\yL)\big)\rightarrow (\zL\leq\xL) \Big).\] $I$ corresponds to the notion of an element being the infimum of a set.
\item
 $C(\sS)$ if and only if $\forall \tS \forall\xL\Big( \big( (\tS \subseteq \sS)\wedge I(\xL,\tS)\big)\rightarrow (\xL\in \sS) \Big)$, so $C$ specifies a limited form of completeness.
\end{itemize}
Now, let $T$ be the $\sL$ theory of bounded, distributive lattices. Define $T^+$ as the natural translation of $T$ into the language $\sL^+$ plus the following additional axioms:

\renewcommand{\theenumi}{\Roman{enumi}}
\begin{enumerate}
\item \label{ax:1}$\forall \xL\yL \Big(\xL\neq\yL \rightarrow \exists\sS\Big( \big(P(\sS)\wedge C(\sS) \big) \wedge \big(\big((\xL\in\sS) \wedge (\yL\notin \sS)\big)\vee \big((\yL\in\sS) \wedge (\xL\notin \sS)\big)\big)  \Big)\Big)$
\item \label{ax:2}$\forall \xL \exists \sS \forall \yL \Big( (\yL>\xL)\leftrightarrow (y\in \sS) \Big)$
\item \label{ax:3} $\forall \sS\tS\exists \uS\forall\xL\Big( \big((\xL\in\sS)\wedge (\xL\in\tS)\big) \leftrightarrow (\xL\in \uS) \Big)$
\end{enumerate}
\renewcommand{\theenumi}{\arabic{enumi}}
The first of these axioms forces the $\bS$ sort into providing a distinguishing set of `complete' (with respect to $\bS$) prime filters, and the second and third force the existence of sufficiently many elements of $\bS$ that this notion of completeness is equivalent to actual completeness, as the lemma below demonstrates.

\begin{Lem}\label{PE}
The class $\{M^{\bL}\upharpoonright_{\sL}\colon M\models T^+ \}$ of $\sL$-reducts of models of $T^+$ is precisely the class of meet-completely representable bounded, distributive lattices.
\end{Lem}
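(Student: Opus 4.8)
The plan is to prove the two inclusions separately, using Theorem~\ref{main2}(1) to replace ``meet-completely representable'' throughout by ``has a distinguishing set of complete, prime filters''. Thus I must show that a bounded distributive lattice $L$ is the $\bL$-sort $\sL$-reduct of some model of $T^+$ precisely when $L$ has such a distinguishing set. The intended meaning of the predicates is that $P(S)$ forces $S$ to name a prime filter, $I(x,S)$ asserts $x=\bw S$, and $C(S)$ asserts completeness \emph{relativised} to the sets actually present in the $\bS$-sort; the crux is that axioms~\ref{ax:2} and~\ref{ax:3} force this relativised completeness to coincide with genuine completeness.

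For the direction from lattices to models, suppose $L\in\mCRL$, so by Theorem~\ref{main2} it carries a distinguishing set $K$ of complete, prime filters. I would build a model $M$ by interpreting the $\bL$-sort as $L$ with its lattice operations and the $\bS$-sort as the full power set $\sP(L)$, reading $\in$ and $\subseteq$ literally. The lattice axioms hold because $L\in\DL$; axiom~\ref{ax:3} holds because $\sP(L)$ is closed under intersection; and axiom~\ref{ax:2} holds because each strict up-set $\{y:y>x\}$ belongs to $\sP(L)$. For axiom~\ref{ax:1}, given $x\neq y$ I take the distinguishing filter in $K$: it satisfies $P$ as a prime filter, and it satisfies $C$ because genuine completeness trivially implies the relativised form, the sets quantified in $C$ being among all subsets of $L$. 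Then $M^{\bL}\upharpoonright_{\sL}=L$, as required.

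For the converse, let $M\models T^+$ and put $L=M^{\bL}\upharpoonright_{\sL}$, a bounded distributive lattice. Let $K$ be the family of $\bS$-sort elements $S$ satisfying $P(S)\wedge C(S)$, viewed as subsets of $L$. By $P$ each such $S$ is a prime filter, and by axiom~\ref{ax:1} the family $K$ is distinguishing, so by Theorem~\ref{main2} it suffices to verify that every $S\in K$ is a \emph{complete} filter. This is the heart of the matter and the step I expect to be the main obstacle. Given $T\subseteq S$ with $x=\bw T$ existing in $L$, I want $x\in S$. If $x\in T$ this is immediate; otherwise every $t\in T$ satisfies $t>x$, so axiom~\ref{ax:2} supplies an $\bS$-element $W=\{y:y>x\}$ with $T\subseteq W$, and axiom~\ref{ax:3} supplies the $\bS$-element $W\cap S$. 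Since $T\subseteq W\cap S$, every member of $W\cap S$ exceeds $x$, and $\bw T=x$, one checks that $x$ is the greatest lower bound of $W\cap S$, that is, $I(x,W\cap S)$ holds. As $W\cap S\subseteq S$, the relativised completeness $C(S)$ now delivers $x\in S$. Thus the sets guaranteed by~\ref{ax:2} and~\ref{ax:3} are exactly what is needed to promote limited completeness to genuine completeness, and the two classes coincide.
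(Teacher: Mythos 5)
Your proof is correct and follows essentially the same route as the paper: Theorem~\ref{main2}(1) reduces everything to distinguishing sets of complete prime filters, the forward direction interprets the $\bS$-sort as $\sP(L)$ with genuine completeness trivially implying the relativised predicate $C$, and the converse uses axioms~\ref{ax:2} and~\ref{ax:3} to form the $\bS$-element $W\cap S$ with $W=\{y\colon y>x\}$ and then applies $C$. If anything your final step is cleaner than the paper's: you verify $I(x,W\cap S)$ directly (every member of $W\cap S$ exceeds $x$, and any lower bound of $W\cap S$ bounds $T\subseteq W\cap S$, hence is $\leq x=\bw T$), whereas the paper splits into cases according to whether $x=\bw\{y\colon y>x\}$ --- a case distinction your argument shows to be unnecessary.
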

\begin{proof}
Clearly if $L$ is in $\mCRL$ its elements satisfy $T$, and $(L, \sP(L), \in)$  satisfy $T^+$, where $\in$ is ordinary set membership. Conversely, if $A=M^{\bL}\upharpoonright_{\sL}$ for some model $M$ of $T^+$ then by axiom \ref{ax:1} of $T^+$ the (interpretation of) the $\in$ predicate naturally defines a distinguishing set $K$ of prime filters of $A$.  We claim that each prime filter in $K$ is complete.
For the claim, let $p\in K$ and $s\subseteq p$ with $x=$inf$(s)$.  We must show that $x\in p$.  If $x\in s$ then this is immediate, so we suppose not: $x\not\in s$.  We
 consider the following cases:
\begin{enumerate}
\item
 $x=$ inf$\{y\colon y>x\}$: then $s\subseteq \{y\colon y>x\}\cap p \subseteq \{y\colon y>x\}$ so $x=$ inf$(s)\geq $ inf$(\{y\colon y>x\}\cap p)\geq x$ and thus inf$(\{y\colon y>x\}\cap p)=x$, but clearly $\{y\colon y>x\}\cap p \subseteq p$ and by axioms~\ref{ax:2} and \ref{ax:3} of $T^+$ also corresponds to an element of the $\bS$ sort. Therefore, by definition of the predicate $C$ we have $x\in p$, as required.

\item $x\neq $ inf$\{y\colon y>x\}$: Let $z$ be a lower bound for $\{y\colon y>x\}$, suppose $z\not \leq x$. Then $x\vee z$ is a lower bound for $\{y\colon y>x\}$ and is contained in $\{y\colon y>x\}$. In light of this assume wlog that inf$\{y\colon y>x\}= z>x$.  Then, as $x=$ inf$(s)$, we have $s\subseteq \{y\colon y>x\}$ and thus $s$ has $z$ as a lower bound, but this a contradiction as $x<z$, so this case cannot arise.
\end{enumerate}
We deduce that $x\in p$, so $p$ is complete, as claimed.
Since $T^+$ demands $A$ be a bounded, distributive lattice we have $A\in\mCRL$, by theorem~\ref{main2}(1).
\end{proof}

We have now proved the following:
\begin{Thm}\label{pseudelem}
$\mCRL$ is pseudo-elementary.
\end{Thm}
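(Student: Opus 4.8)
The plan is to establish that $\mCRL$ is pseudo-elementary by exhibiting the two-sorted language $\sL^+$, the theory $T^+$, and showing that the class of $\bL$-sort $\sL$-reducts of models of $T^+$ is exactly $\mCRL$. Since Lemma~\ref{PE} already proves precisely this equality, the theorem follows almost immediately by unwinding Definition~\ref{PEclass}.

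Concretely, I would proceed as follows. First, I would verify that $\sL^+ = \sL \cup \{\in\}$ satisfies condition (1) of Definition~\ref{PEclass}: it has two disjoint sorts $\bL$ and $\bS$, and the lattice symbols $\{+,\cdot,0,1\}$ are $\bL$-sorted copies of all symbols of $\sL$. (The auxiliary predicates $\leq$, $\geq$, $\subseteq$, $P$, $I$, $C$ are all explicitly defined from $\in$ and the lattice symbols, so they add no genuine new primitives and cause no difficulty.) Second, I would point to $T^+$ as the required $\sL^+$-theory of condition (2). Third, and this is the heart of the matter, I would invoke Lemma~\ref{PE}, which states exactly that
\[
\{M^{\bL}\upharpoonright_{\sL}\colon M\models T^+\} = \mCRL.
\]

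The main obstacle has in fact already been discharged in Lemma~\ref{PE}, namely the nontrivial verification that the syntactic $\bS$-relative notion of completeness enforced by the predicate $C$ together with axioms~\ref{ax:2} and~\ref{ax:3} coincides with genuine meet-completeness of the prime filters—this is the case analysis on whether $x = \mathrm{inf}\{y\colon y>x\}$. Given that lemma, no further work remains: the displayed equality is literally the defining condition of pseudo-elementarity, so $\mCRL$ is pseudo-elementary by Definition~\ref{PEclass}. Finally I would remark, as the surrounding text already does, that the same construction adapts to $\jCRL$ (via the order dual, Theorem~\ref{duality}), to $\CRL$ (by conjoining the completely-prime condition), and to $\biCRL = \mCRL \cap \jCRL$, so that all four classes in~\eqref{dagger} are pseudo-elementary and hence closed under ultraproducts.
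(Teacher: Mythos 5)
Your proposal is correct and matches the paper's own proof, which likewise establishes Theorem~\ref{pseudelem} simply by observing that Lemma~\ref{PE} verifies the conditions of Definition~\ref{PEclass} for $\mCRL$. The additional checks you spell out (disjoint sorts, $\bL$-sorted copies of the lattice symbols, definability of the auxiliary predicates) are harmless elaborations of the same argument.
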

\begin{proof}
Lemma \ref{PE} shows the condition of definition \ref{PEclass} hold for $\mCRL$.
\end{proof}
It is not difficult to see how analogous results can also be proved for $\jCRL$, $\biCRL$ and $\CRL$ using a similar method.
 
\end{section}

\begin{section}{Ultraproducts and ultraroots} \label{SECur}
We begin this section by stating two well known facts from model theory:
\begin{Fact}\label{she cor}
A class $\mathscr{C}$ of similar structures is elementary iff it is closed under isomorphism, ultraproducts and ultraroots.
\end{Fact}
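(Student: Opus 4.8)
The plan is to prove the two implications separately, leaning on two standard pillars of model theory: \L{}o\'s's theorem on the preservation of first-order formulas under ultraproducts, and the Keisler--Shelah isomorphism theorem, which asserts that similar structures are elementarily equivalent precisely when they admit isomorphic ultrapowers. The left-to-right implication is routine; the right-to-left implication carries the real content, and it is there that Keisler--Shelah does the heavy lifting.

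For the forward direction, suppose $\mathscr{C}$ is elementary, say $\mathscr{C}=\mathrm{Mod}(T)$ for a first-order theory $T$. Closure under isomorphism is immediate, since isomorphic structures satisfy the same sentences. Closure under ultraproducts is exactly \L{}o\'s's theorem: if each factor models $T$, then every sentence of $T$ holds in the ultraproduct, so the ultraproduct models $T$. Closure under ultraroots follows because any structure $A$ embeds elementarily into each of its ultrapowers via the diagonal map (again by \L{}o\'s), whence $A\equiv A^I/U$; thus if $A^I/U\models T$ then $A\models T$, i.e.\ $A\in\mathscr{C}$.

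For the converse, assume $\mathscr{C}$ is closed under isomorphism, ultraproducts and ultraroots, and set $T=\mathrm{Th}(\mathscr{C})$, the sentences true throughout $\mathscr{C}$. Since $\mathscr{C}\subseteq\mathrm{Mod}(T)$ is clear, it suffices to take an arbitrary $A\models T$ and show $A\in\mathscr{C}$. First I would manufacture a member of $\mathscr{C}$ elementarily equivalent to $A$. Writing $\Delta=\mathrm{Th}(A)$ for the complete theory of $A$, I claim every finite $\Delta_0\subseteq\Delta$ has a model in $\mathscr{C}$: otherwise the negation of $\bigwedge\Delta_0$ would hold throughout $\mathscr{C}$, hence lie in $T$ and so in $A$, contradicting $A\models\Delta_0$. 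Indexing by the finite subsets of $\Delta$, choosing for each such subset a member of $\mathscr{C}$ satisfying it, and extending the family $\{i:\phi\in i\}_{\phi\in\Delta}$ (which has the finite intersection property) to an ultrafilter $U$, \L{}o\'s's theorem makes the ultraproduct $B=\prod_U B_i$ satisfy all of $\Delta$; closure under ultraproducts gives $B\in\mathscr{C}$, and completeness of $\Delta$ gives $B\equiv A$.

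The final step, and the point where the genuine difficulty resides, is to pass from elementary equivalence back to membership, which is precisely where Keisler--Shelah enters: from $A\equiv B$ we obtain an index set $J$ and an ultrafilter $V$ with $A^J/V\cong B^J/V$. Closure under ultraproducts puts $B^J/V$ in $\mathscr{C}$, closure under isomorphism then puts $A^J/V$ in $\mathscr{C}$, and closure under ultraroots finally delivers $A\in\mathscr{C}$, completing $\mathrm{Mod}(T)\subseteq\mathscr{C}$. I expect the main obstacle to be entirely external to the bookkeeping above: it is the Keisler--Shelah theorem itself, whose proof in its ZFC form (avoiding the generalized continuum hypothesis of Keisler's original argument) demands a substantial construction of sufficiently saturated ultrapowers. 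Granting that theorem as known, every remaining step is a short application of \L{}o\'s's theorem together with a compactness-style argument, which is why the result is reasonably recorded here as a cited fact rather than proved in full.
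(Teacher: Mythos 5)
Your proof is correct: the paper records this statement as a Fact and does not prove it, deferring instead to \cite[corollary~9.5.10]{Hodg93} and \cite[theorem~4.1.12 and corollary~4.3.13]{ChaKei90}, and your argument --- \L{}o\'s's theorem for the forward direction, then a compactness-style ultraproduct construction to get a member of $\mathscr{C}$ elementarily equivalent to $A$, followed by the Keisler--Shelah isomorphism theorem and closure under isomorphism and ultraroots --- is precisely the standard proof contained in those references. Nothing is missing, granted that Keisler--Shelah is taken as known, which is exactly how the cited sources (and the paper) treat it.
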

\begin{Fact}
$\sC$ is pseudo-elementary $\implies$ $\sC$ is closed under ultraproducts.
\end{Fact}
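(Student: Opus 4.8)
The plan is to reduce the claim to \L o\'s's theorem in the two-sorted setting, exploiting the fact that the ultraproduct construction commutes with both passage to the $\bL$-sort and the taking of $\sL$-reducts.

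First I would fix the data witnessing pseudo-elementarity. By definition~\ref{PEclass} there is a two-sorted language $\sL^+$ with disjoint sorts $\bL$ and $\bS$ and an $\sL^+$ theory $U$ with $\sC=\{M^{\bL}\upharpoonright_{\sL}\colon M\models U\}$. Now let $\{A_i\colon i\in I\}$ be any family of structures in $\sC$ and let $\mathcal{U}$ be an ultrafilter on $I$; the goal is to show that $\prod_{\mathcal{U}}A_i$ again lies in $\sC$. Since each $A_i\in\sC$, for every $i\in I$ I may choose a two-sorted model $M_i\models U$ with $A_i=M_i^{\bL}\upharpoonright_{\sL}$. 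I would then form the two-sorted ultraproduct $M=\prod_{\mathcal{U}}M_i$, recalling that in the many-sorted setting the domain of each sort of $M$ is the ordinary ultraproduct of the corresponding sort-domains of the $M_i$, and the symbols of $\sL^+$ are interpreted coordinatewise modulo $\mathcal{U}$. The many-sorted version of \L o\'s's theorem applies verbatim, so since every $M_i$ satisfies the sentences of $U$, so does $M$; that is, $M\models U$.

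The one point requiring care is the identification $M^{\bL}\upharpoonright_{\sL}=\prod_{\mathcal{U}}A_i$, which I expect to be the main (though routine) obstacle. Because all symbols of $\sL$ are $\bL$-sorted in $\sL^+$, the $\bL$-sort of $M$ together with its $\sL$-structure is computed using only the $\bL$-sort domains and the $\bL$-sorted interpretations of the $M_i$; but these are exactly the domains and interpretations of the structures $A_i=M_i^{\bL}\upharpoonright_{\sL}$. Hence the $\bL$-sort $\sL$-reduct of the ultraproduct coincides, as an $\sL$-structure, with the ultraproduct of the reducts. Since $M\models U$, this exhibits $\prod_{\mathcal{U}}A_i=M^{\bL}\upharpoonright_{\sL}$ as the $\bL$-sort $\sL$-reduct of a model of $U$, whence $\prod_{\mathcal{U}}A_i\in\sC$, completing the argument.
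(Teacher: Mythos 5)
Your proof is correct. The paper does not actually spell out an argument for this Fact---it dismisses it as simple and cites \cite[exercise~4.1.17]{ChaKei90}---and what you have written is precisely the standard argument that citation points to, correctly adapted to the two-sorted formulation of pseudo-elementarity in definition~\ref{PEclass}: many-sorted \L o\'s applied to the witnessing models $M_i\models U$, plus the (routine but essential) observation that the ultraproduct construction commutes with passing to the $\bL$-sort $\sL$-reduct.
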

For proof of the first see e.g. \cite[corollary~9.5.10]{Hodg93} or \cite[theorem~4.1.12 and corollary~4.3.13]{ChaKei90}. The proof of the second is simple, see e.g. \cite[exercise~4.1.17]{ChaKei90}.

In view of the facts above and the material in the preceding section, since $\CRL$ is pseudo-elementary, and closed under isomorphism, but is not elementary, it cannot be closed under ultraroots. $\mCRL$, $\jCRL$ and $\biCRL$ will be elementary if and only if they are closed under ultraroots. Note that $\mCRL$ is elementary iff $\jCRL$ is elementary (by duality), and therefore $\mCRL$ is elementary $\implies\biCRL$ is elementary (as $\biCRL=\mCRL\cap\jCRL$). It is not known which, if any, of $\biCRL$, $\mCRL$ and $\jCRL$ are closed under ultraroots but it is possible to state some conditions on a lattice $L$ which must necessarily hold if $L\not\in X$ but an ultrapower of $L$ belongs to $X$ (where $X=\biCRL, \mCRL$ or $\jCRL$).

First of all in order for the ultraproduct $\UP$ to be meet-completely representable $L$ must be $\vee(\bw)$-distributive, i.e. for $a\in L,\; S\subseteq L$ if both sides of the equation below are defined then they are equal
\[ a\vee\bw S =\bw_{s\in S}(a\vee s)\]
as we shall see in the next proposition. Note that the converse to this is false as, for example, every Boolean algebra is $\vee(\bw)$-distributive (see e.g. \cite[theorem~5.13]{Rom08}  for a proof) but not necessarily atomic, so not necessarily meet-completely representable by corollary \ref{BooCor}. We will use the following notation and lemma:
\begin{itemize}
\item For $a \in L$ define $\bar{a}\in \prod_I L$ by $\bar{a}(i)=a$ for all $i\in I$.
\item  Fix some ultrafilter $U$ over $I$.  For $x\in\prod_IL$ we write $[x]$ for $\set{y\in\prod_IL\colon \colon \set{i\colon x(i)=y(i)}\in U}$.
\item For $S\subseteq L$ define $S^*=\{[x]\in \UP \colon \{i\in I\colon x(i)\in S\}\in U\}$.
\item For $T\subseteq\UP$ define $T_*=\{a\in L\colon [\ba]\in T\}$.
\end{itemize}

\begin{Lem}\label{inf exist}
Let $S\subseteq L$ and suppose $\bw S$ exists in $L$. Then $\bw (S^*)$ exists in $\UP$ and equals $[\overline{\bw S}]$. 
\end{Lem}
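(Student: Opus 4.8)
The plan is to verify directly that $[\overline{\bw S}]$ is the greatest lower bound of $S^*$ in $\UP$, working with the usual ordering on the ultrapower: $[x]\leq[y]$ iff $\set{i\colon x(i)\leq y(i)}\in U$ (this is the reduct of the lattice order, definable from $\cdot$). Write $m=\bw S$, so the claim becomes $[\bar m]=\bw(S^*)$. Two things must be checked: that $[\bar m]$ is a lower bound of $S^*$, and that it dominates every lower bound. (If $S=\emptyset$ then $m=1$, $S^*=\emptyset$, and both sides equal the top element $[\bar 1]$, so I assume $S\neq\emptyset$ below.)

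For the lower-bound part I would take an arbitrary $[x]\in S^*$, so that $B=\set{i\colon x(i)\in S}\in U$. Since $m\leq s$ for every $s\in S$, we get $m\leq x(i)$ for every $i\in B$, whence $\set{i\colon \bar m(i)\leq x(i)}\supseteq B\in U$ and therefore $[\bar m]\leq[x]$. This step is routine, using only that $\bw S$ is a lower bound of $S$ together with the upward closure of $U$.

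The substance is showing $[\bar m]$ is the \emph{greatest} lower bound. Suppose $[y]$ is any lower bound of $S^*$; I want $[y]\leq[\bar m]$. Arguing by contradiction, suppose not, so that $A=\set{i\colon y(i)\not\leq m}\in U$. The key observation is that for each $i\in A$ the element $y(i)$ is not below $\bw S$ and hence cannot be a lower bound of $S$, so there is a witness $s_i\in S$ with $y(i)\not\leq s_i$. Using these I would build a diagonal element $x\in\prod_I L$ by setting $x(i)=s_i$ for $i\in A$ and $x(i)$ equal to some fixed element of $S$ otherwise; then $x(i)\in S$ for all $i$, so $[x]\in S^*$. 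As $[y]$ is a lower bound we must have $[y]\leq[x]$, i.e.\ $\set{i\colon y(i)\leq x(i)}\in U$; but this set is disjoint from $A$ (since $y(i)\not\leq s_i=x(i)$ for $i\in A$), contradicting $A\in U$. Hence $[y]\leq[\bar m]$, and together with the first part this gives $\bw(S^*)=[\bar m]=[\overline{\bw S}]$.

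The main obstacle is precisely this diagonalisation. The set $S^*$ is far larger than the image $\set{[\bar s]\colon s\in S}$ of $S$, since its members may take different values of $S$ at different coordinates, so testing a candidate lower bound only against the constant sequences $[\bar s]$ would not be enough. The trick is to use choice to assemble the coordinate-wise witnesses $s_i$ into a single sequence lying in $S^*$ that is tailored to defeat any $[y]$ not below $[\bar m]$; the translation of ``$y(i)$ is not a lower bound of $S$'' into an explicit witness $s_i$, licensed by $m=\bw S$ being the greatest lower bound, is what makes the argument go through.
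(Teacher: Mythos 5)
Your proof is correct and follows essentially the same route as the paper's own algebraic argument: show $[\overline{\bw S}]$ is a lower bound, then, given a putative lower bound not below it, use choice to assemble coordinate-wise witnesses $s_i$ with $y(i)\not\leq s_i$ into a diagonal element of $S^*$ that defeats it. The only differences are cosmetic (you pad the diagonal sequence with a fixed element of $S$ rather than with $\bw S$, and you treat $S=\emptyset$ separately), so there is nothing to add.
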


\begin{proof}
This can be proved by defining an additional predicate `$S$' in the language of lattices meant to correspond to `being an element of the set $S$', the result then following easily from \L o\'s' theorem. An alternative algebraic proof is as follows:
Clearly $[\overline{\bw S}]$ is a lower bound for $S^*$. Suppose $[z]$ is another such lower bound and $[z]\not \leq [\overline{\bw S}]$. Then $\{i\in I\colon z(i)\not \leq \bw S\}\in U$, so $\{i\in I\colon \exists s_i\in S$ with $z(i)\not \leq s_i\}\in U$, $=u$ say (as $\bw S$ is the greatest lower bound of $S$). Define $x$ by $x(i)=s_i$ for $i\in u$ and $x(i)=\bw S$ otherwise. Then $[x]\in S^*$ but $[z]\not \leq [x]$, but this contradicts the assertion that $[z]$ is a lower bound. 
\end{proof}  

\begin{Cor}\label{dist root}
The class of $\vee(\bw)$-distributive bounded lattices is closed under ultraroots.
\end{Cor}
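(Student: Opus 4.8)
The plan is to prove the contrapositive: if $L$ is \emph{not} $\vee(\bw)$-distributive, then no ultrapower $\UP$ is $\vee(\bw)$-distributive either. Since the defining condition quantifies over arbitrary subsets $S\subseteq L$ it is not first order, so \L o\'s' theorem cannot be applied directly; instead I would transfer the failure of distributivity from $L$ to $\UP$ along the diagonal embedding $a\mapsto[\ba]$, using lemma \ref{inf exist} to carry the relevant infima across.

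First I would fix witnesses to the failure in $L$: an element $a\in L$ and a subset $S\subseteq L$ for which both $a\vee\bw S$ and $\bw_{s\in S}(a\vee s)$ are defined yet $a\vee\bw S\neq\bw_{s\in S}(a\vee s)$ (necessarily $S\neq\emptyset$, else both sides equal $1$). Writing $T=\set{a\vee s\colon s\in S}$, both $\bw S$ and $\bw T$ exist in $L$ by hypothesis, so lemma \ref{inf exist} applies twice to give $\bw(S^*)=[\overline{\bw S}]$ and $\bw(T^*)=[\overline{\bw T}]$ in $\UP$.

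The key step is to identify the set of elements whose infimum gives the right-hand side of the distributivity law in $\UP$, by showing
\[ \set{[\ba]\vee[x]\colon [x]\in S^*}=T^* \]
as sets of equivalence classes. The inclusion $\subseteq$ is immediate: if $[x]\in S^*$ then $x(i)\in S$ on a set in $U$, and on that set $(\ba\vee x)(i)=a\vee x(i)\in T$, so $[\ba]\vee[x]=[\ba\vee x]\in T^*$. For the reverse inclusion I would argue as in the proof of lemma \ref{inf exist}: given $[y]\in T^*$, the set $u=\set{i\colon y(i)\in T}$ lies in $U$, and for each $i\in u$ I may choose $s_i\in S$ with $y(i)=a\vee s_i$; defining $x(i)=s_i$ on $u$ and $x(i)$ equal to some fixed element of $S$ off $u$ yields $[x]\in S^*$ with $[\ba]\vee[x]=[y]$.

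Granting this identity, the right-hand meet $\bw_{[x]\in S^*}([\ba]\vee[x])$ equals $\bw(T^*)=[\overline{\bw T}]=[\overline{\bw_{s\in S}(a\vee s)}]$, while the left-hand side is $[\ba]\vee\bw(S^*)=[\ba]\vee[\overline{\bw S}]=[\overline{a\vee\bw S}]$. Both sides are therefore defined in $\UP$, but since the diagonal map is injective and $a\vee\bw S\neq\bw_{s\in S}(a\vee s)$ in $L$, they are distinct; hence $\UP$ fails $\vee(\bw)$-distributivity, which is what the contrapositive requires. I expect the main obstacle to be the reverse inclusion above, as it is the only place requiring a genuine pointwise construction (a choice of witnesses $s_i$) rather than a routine calculation; everything else reduces to the two applications of lemma \ref{inf exist} together with injectivity of the diagonal embedding.
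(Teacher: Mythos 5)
Your proof is correct and takes essentially the same route as the paper: both argue the contrapositive, transferring a failure of $\vee(\bw)$-distributivity from $L$ to $\UP$ along the diagonal embedding by applying lemma \ref{inf exist} to the witness set and to the set of joins. The only difference is one of detail: the identification $\set{[\ba]\vee[x]\colon [x]\in S^*}=T^*$, which you prove explicitly (including the choice of witnesses $s_i$), is left implicit in the paper's one-line chain of equalities, so your write-up is a fleshed-out version of the same argument.
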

\begin{proof}
By lemma \ref{inf exist} if there is some $A\cup\{b\}\subseteq L$ with $b\vee\bw A\neq\bw(b\vee A)$ then $\bw A^* \vee [\bb]=[\bar {\bw A}]\vee [\bb]=[(\overline{\bw A) \vee} b]\neq [\overline{\bw (A\vee b})]=\bw (A^*\vee [\bb])$, so if $L$ is not $\vee(\bw)$-distributive then neither is $\UP$.
\end{proof}

\begin{Prop}\label{ur1}
If $\UP$  has a meet-complete representation then $L$ is $\vee(\bw)$-distributive. 
\end{Prop}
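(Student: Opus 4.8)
The plan is to reduce the statement to a single general observation — that every meet-completely representable lattice is $\vee(\bw)$-distributive — and then to transport $\vee(\bw)$-distributivity from $\UP$ back down to $L$ by invoking corollary \ref{dist root}. So the argument splits into one elementary lemma about representations and one appeal to closure under ultraroots.

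First I would establish the general fact: if a bounded distributive lattice $M$ has a meet-complete representation $h\colon M\to\sP(X)$, then $M$ is $\vee(\bw)$-distributive. Take $a\in M$ and $S\subseteq M$ for which both $a\vee\bw S$ and $\bw_{s\in S}(a\vee s)$ exist in $M$. Since $h$ is a representation it preserves finite joins, and since it is meet-complete it sends any existing meet to the corresponding intersection; hence $h(a\vee\bw S)=h(a)\cup\bigcap_{s\in S}h(s)$ and $h\big(\bw_{s\in S}(a\vee s)\big)=\bigcap_{s\in S}\big(h(a)\cup h(s)\big)$. The infinite distributive law valid in the ring of sets $\sP(X)$ gives $h(a)\cup\bigcap_{s\in S}h(s)=\bigcap_{s\in S}(h(a)\cup h(s))$, so the two images coincide. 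As $h$ is injective this forces $a\vee\bw S=\bw_{s\in S}(a\vee s)$, which is exactly $\vee(\bw)$-distributivity.

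Applying this with $M=\UP$, the hypothesis that $\UP$ has a meet-complete representation yields immediately that $\UP$ is $\vee(\bw)$-distributive. By corollary \ref{dist root} the class of $\vee(\bw)$-distributive bounded lattices is closed under ultraroots, so from $\UP\ (=\prod_U L)$ being $\vee(\bw)$-distributive we conclude that $L$ is $\vee(\bw)$-distributive, as required.

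The computation itself is routine, and I do not expect a serious obstacle; the two points deserving care are, first, that meet-completeness is legitimately applied to the possibly infinite meet $\bw_{s\in S}(a\vee s)$ — which is precisely why the definition of $\vee(\bw)$-distributivity assumes both sides are defined — and second, that the distributive identity being used is the genuinely infinitary law holding in $\sP(X)$, not merely the finite distributivity of $M$. The real content of the ultraroot transport is already packaged in corollary \ref{dist root} (resting on lemma \ref{inf exist}); what remains here is essentially the elementary remark that fields of sets are completely distributive.
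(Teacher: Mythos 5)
Your proof is correct and follows exactly the paper's own route: the paper likewise deduces that $\UP$ inherits $\vee(\bw)$-distributivity from its meet-complete representation and then invokes corollary \ref{dist root} to pass down to the ultraroot $L$. The only difference is that you write out the inheritance computation (union distributing over arbitrary intersections in $\sP(X)$, plus injectivity) which the paper leaves implicit, and this detail is carried out correctly.
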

\begin{proof}
This follows from corollary \ref{dist root} and the fact that when $\UP$ is in $\mCRL$ it inherits $\vee(\bw)$-distributivity from its representation. 
\end{proof}

By duality a similar result holds for $\jCRL$, and hence for $\biCRL$. In order for $\UP$ to be in $\mCRL$ but $L$ not to be it turns out $L$ must satisfy an infinite density property, which we make precise in the next proposition.
\begin{Prop}\label{ur2}
If $\UP$  has a meet-complete representation but $L$ does not then there is a pair $x<y$ such that for every pair $a<b\in [x,y]$ there is some $c$ with $a<c<b$. 
\end{Prop}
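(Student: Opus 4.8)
The plan is to argue by contradiction, refuting the negation of the conclusion. Call $L$ \emph{weakly atomic} if every interval $[u,v]$ with $u<v$ contains a covering pair (a pair $c\lessdot d$, i.e.\ $c<d$ with nothing strictly between). The conclusion ``some $[x,y]$ is densely ordered'' is exactly the failure of weak atomicity, so it suffices to show: if $\UP$ has a meet-complete representation, $L$ does not, and $L$ \emph{is} weakly atomic, then a contradiction follows. Throughout I would translate $\mCRL$-membership via theorem~\ref{main2} and use that $L$ is $\vee(\bw)$-distributive by proposition~\ref{ur1}.

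First I would reduce to a \emph{covering} pair. Since $L\notin\mCRL$, theorem~\ref{main2} says the complete prime filters of $L$ are not distinguishing, so some distinct pair is separated by no complete prime filter; passing to the meet of the two elements and one of them, I may take this to be a comparable pair $x<y$. By weak atomicity $[x,y]$ contains a covering pair $a\lessdot b$, and this pair is still unseparated: a complete prime filter $G$ with $b\in G$, $a\notin G$ would give $y\in G$ (as $b\le y$) and $x\notin G$ (as $x\le a$), separating $x,y$. Thus I obtain a covering pair $a\lessdot b$ that no complete prime filter of $L$ separates.

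Next I would import a filter from the ultrapower. As $\UP\in\mCRL$ and $[\ba]\lessdot[\bb]$ in $\UP$, theorem~\ref{main2} gives a complete prime filter $H$ of $\UP$ with $[\bb]\in H$, $[\ba]\notin H$. Its pullback $F=\{l\in L\colon [\bar l]\in H\}$ is a prime filter of $L$ with $b\in F$, $a\notin F$. Since $a,b$ is unseparated, $F$ cannot be complete, so some $T\subseteq F$ has $\bw_L T\notin F$; replacing $T$ by $\{t\wedge b\colon t\in T\}$ I may assume a witness $S\subseteq F\cap[m,b]$ with $\bw_L S=m\notin F$ and $m\le b$.

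The crux is a short dichotomy on where $a\vee m$ sits. Because $a\lessdot b$ and $a\le a\vee m\le b$, either $a\vee m=b$ or $a\vee m=a$. If $a\vee m=b$, then $b=a\vee m\in F$ with $F$ prime forces $a\in F$ or $m\in F$, both false. If instead $a\vee m=a$, i.e.\ $m\le a$, I invoke $\vee(\bw)$-distributivity (proposition~\ref{ur1}): since $\bw_{s\in S}(s\wedge b)=(\bw_L S)\wedge b=m$ and $m\le a$,
\[ a=a\vee\bw_{s\in S}(s\wedge b)=\bw_{s\in S}\big(a\vee(s\wedge b)\big), \]
and each $a\vee(s\wedge b)$ lies in $[a,b]=\{a,b\}$; as the meet equals $a$, some term is $a$, giving $s^*\in S$ with $s^*\wedge b\le a$, whence $s^*\wedge b\in F$ and so $a\in F$ --- again false. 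Either way the choice of $a,b$ is contradicted, so $L$ is not weakly atomic and the desired densely ordered interval exists. I expect the main obstacle to be exactly this final dichotomy together with the bookkeeping that makes it close: weak atomicity is used \emph{only} to replace the unseparated pair by an unseparated covering pair, and it is precisely the covering property $[a,b]=\{a,b\}$ that lets primality kill the case $a\vee m=b$ and forces $a\vee(s\wedge b)$ to be two-valued in the case $m\le a$; one must also refine the non-completeness witness into $[m,b]$ so that $m\le b$ (needed for $\bw_{s\in S}(s\wedge b)=m$) and appeal to proposition~\ref{ur1}, without which the displayed identity, and hence the argument, breaks down.
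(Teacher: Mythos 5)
Your proof is correct, and while it shares its skeleton with the paper's, the decisive step is genuinely different. Both arguments start from a pair $x<y$ that no complete prime filter of $L$ separates, suppose toward a contradiction that $[x,y]$ contains a covering pair $a\lessdot b$, and pull back a complete prime filter of $\UP$ separating $[\ba]$ from $[\bb]$ to a prime filter of $L$ containing $b$ but not $a$. The paper then derives its contradiction \emph{inside the ultrapower}: for $S\subseteq\gamma_*$ with $\bw S$ defined it passes to $S^*$, claims $[z]\vee[\ba]=[\bb]$ for every $[z]\in S^*$ (as literally stated this needs the elements of $S$ to be replaced by $s\wedge b$ first --- exactly the normalization into $[m,b]$ that you carry out explicitly), deduces $S^*\subseteq\gamma$ by primality of $\gamma$, and then uses completeness of $\gamma$ together with lemma~\ref{inf exist} to conclude $\bw S\in\gamma_*$; thus $\gamma_*$ is a \emph{complete} prime filter separating $x$ from $y$, a contradiction. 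You instead descend to $L$ at once: unseparatedness transfers to the covering pair, so the pullback $F$ is \emph{not} complete, and the incompleteness witness is refuted purely inside $L$ via the dichotomy $a\vee m\in\{a,b\}$, using primality of $F$ and $\vee(\bw)$-distributivity from proposition~\ref{ur1}. Your route makes the ultrapower enter only through proposition~\ref{ur1} --- indeed your $F$ could equally be supplied by the prime ideal theorem, with no reference to $H$ at all --- so you have in effect proved the cleaner standalone fact that every $\vee(\bw)$-distributive bounded distributive lattice outside $\mCRL$ contains a densely ordered interval, which combined with proposition~\ref{ur1} gives the proposition. The paper's route, by contrast, is independent of proposition~\ref{ur1}, needing only lemma~\ref{inf exist} and the filter calculus in $\UP$. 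Both are valid; yours is somewhat more modular and more careful about the bookkeeping, at the cost of invoking one extra prior result.
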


\begin{proof}
If $L$ is not in $\mCRL$ then there is a pair $x,y,\in L$ that cannot be distinguished by a complete, prime filter. Wlog assume $x<y$. Since $\UP$ is in $\mCRL$, for each pair $a<b\in [x,y]$ there is a complete, prime filter $\gamma$ distinguishing $[\ba]$ and $[\bb]$. It's easy to show that $\gamma_*$ is a prime filter of $L$ with $b\in \gamma_*$ and $a\notin \gamma_*$ (and thus $y\in \gamma_*$ and $x\notin \gamma_*$). Let $a<b$ and $(a,b)=\emptyset$ and suppose $S\subseteq \gamma_*$. Then for each $[z]\in S^*$ we have must have $[z]\vee[\ba]=[\bb]$, and thus by primality $S^*\subseteq \gamma$. So by lemma \ref{inf exist} $\bw S\in \gamma_*$, and so $\gamma_*$ is complete, which is a contradiction as we assumed $x$ and $y$ could not be distinguished by a complete, prime filter.
\end{proof}
Again by duality the same result holds for join-complete representations. Note that if we could find a counter example $(L, \prod_U L)$ where $\UP\in\mCRL, \;$ $L\not\in\mCRL$,  we could restrict to the sublattice bounded by $x$ and $y$, so we lose nothing by assuming that $x$ and $y$ are the lower and upper bounds respectively, and that the whole lattice therefore has this density property. 
 
We have seen that the class of completely representable Boolean algebras is atomic (indeed finitely axiomatisable) and that the class $\CRL$ of completely representable lattices is not.
\begin{Conj}
None of the classes $\jCRL, \mCRL, \biCRL$ is elementary.
\end{Conj}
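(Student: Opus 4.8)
The plan is to deduce the conjecture from a single structural failure. Since each of $\mCRL$, $\jCRL$, $\biCRL$ is pseudo-elementary (by theorem~\ref{pseudelem} and the remark following it) and closed under isomorphism, Fact~\ref{she cor} reduces the problem to showing that none of them is closed under ultraroots. That is, I would aim to produce a bounded distributive lattice $L$ together with an ultrapower $\UP$ such that $\UP$ has a meet-complete representation while $L$ does not. By theorem~\ref{duality} the order dual of $L$ then witnesses the failure of $\jCRL$ under ultraroots, so a single such $L$ settles two of the three classes at once; for $\biCRL$ I would seek a self-dual $L$, so that the same example, read through the duality, simultaneously fails both meet- and join-complete representability while its ultrapower has both.

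Before constructing $L$ I would read off the constraints that propositions~\ref{ur1} and~\ref{ur2} force on any such witness. By proposition~\ref{ur1}, $L$ must be $\vee(\bw)$-distributive, and by proposition~\ref{ur2} (after restricting to the interval $[x,y]$ as in the remark following it) $L$ must be densely ordered: between any $a<b$ there is some $c$ with $a<c<b$. Moreover no chain can serve, since in a chain every principal up-set $[a,1]$ is a complete prime filter, so every chain already lies in $\mCRL$. Thus the witness must be a dense, non-linear, $\vee(\bw)$-distributive lattice in which the pair $0,1$ (say) cannot be separated by any complete prime filter.

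The construction itself is the heart of the matter. I would look for a two-dimensional dense lattice — a dense analogue of the product-of-chains construction of example~\ref{exIII}, for instance built from two copies of $[0,1]\cap\Q$ — arranged so that (i) every proper prime filter $F$ of $L$ admits some $S\subseteq F$ whose existing infimum $\bw S$ lies outside $F$, so that $L$ has no proper complete prime filter and hence $L\notin\mCRL$ by theorem~\ref{main2}(1); yet (ii) in the ultrapower the nonstandard elements supplied by $U$ realise the infima that $L$ was missing. To verify (ii) I would use the transfer machinery of the operations $S\mapsto S^*$ and $T\mapsto T_*$ together with lemma~\ref{inf exist}: the point is that a complete prime filter $\gamma$ of $\UP$ need not descend to a complete prime filter $\gamma_*$ of $L$ precisely because $L$ is dense, which is exactly the loophole left open by the proof of proposition~\ref{ur2}. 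One then checks that enough such $\gamma$ exist to distinguish all pairs of diagonal elements $[\ba],[\bb]$, yielding a distinguishing set of complete prime filters for $\UP$.

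The main obstacle — and the reason the statement is posed as a conjecture rather than a theorem — is step (ii): exhibiting an ultrapower that genuinely acquires complete prime filters absent from $L$. Ultrapowers tend to introduce new gaps rather than fill existing ones, so it is not even clear a priori whether these classes fail closure under ultraroots or are in fact elementary; the necessary conditions above narrow the search drastically but do not by themselves produce the example. The $\biCRL$ case compounds the difficulty, since one must control meet- and join-completeness of $\UP$ simultaneously, which is why a self-dual construction is the natural target there.
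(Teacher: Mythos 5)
The statement you are attacking is posed in the paper as a conjecture: the paper offers no proof of it (it closes with ``We intend to prove this in a subsequent article''), so the only material to compare against is the paper's own preparatory work in sections~\ref{SECpseud} and~\ref{SECur}. Your framing reproduces that work exactly: since $\mCRL$, $\jCRL$ and $\biCRL$ are pseudo-elementary (theorem~\ref{pseudelem} and the remark following it) and closed under isomorphism and ultraproducts, Fact~\ref{she cor} makes non-elementarity equivalent to failure of closure under ultraroots; propositions~\ref{ur1} and~\ref{ur2} are precisely the paper's necessary conditions on a witness, and you read them off correctly, including the point that any witness must be dense and non-linear (chains all lie in $\mCRL$), that duality transfers an $\mCRL$ witness to a $\jCRL$ witness, and that $\biCRL$ needs additional control since the paper's remark that $\mCRL$ elementary implies $\biCRL$ elementary only runs in the unhelpful direction.

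However, your proposal does not prove the statement, and you concede as much: step (ii) --- actually exhibiting a bounded distributive lattice $L\notin\mCRL$ and an ultrafilter $U$ with $\UP\in\mCRL$ --- is never carried out, and that step \emph{is} the conjecture. Everything preceding it (the reduction via Fact~\ref{she cor}, the constraints from propositions~\ref{ur1} and~\ref{ur2}, the exclusion of chains) is already established in the paper; everything following it is aspiration rather than argument. Nor is the gap a routine verification one could fill by inspection: as you yourself observe, it is not even known that such an $L$ exists --- the classes could be closed under ultraroots, in which case the conjecture is false and no construction of the kind you describe can succeed. A plan whose crucial step is ``find the example'' with no candidate lattice, no construction, and no argument that a candidate would acquire the missing complete prime filters in the ultrapower is a restatement of the open problem, not a solution to it. The verdict: correct and well-motivated framing, identical to the setup the paper provides, but with a genuine, acknowledged gap at exactly the point where the paper also stops.
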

We intend to prove this in a subsequent article.
\end{section}


\begin{thebibliography}{99}
\bibitem{Bir33}
		Birkhoff, G.:
		On the combination of subalgebras.
    Proc. Camb. Math. Soc.
    \textbf{29}, 441--464 (1933)
    
\bibitem{BirFri48}
		Birkhoff, G., Frink Jr., O.:
		Representations of lattices by sets.
    Trans. Am. Math. Soc.
    \textbf{64}, 299--316 (1948)

\bibitem{ChaKei90}
    Chang, C., Keisler, H.:
    Model Theory, 3rd edn. 
    North Holland, Amsterdam (1990)

\bibitem{CrawDil73}
    Crawley, P., Dilworth, R.:
    Algebraic theory of lattices. 
    Prentice Hall, Englewood Cliffs N.J (1973)

\bibitem{GehJon04}
		Gehrke, M., J\'onsson, B.: 
		Bounded distributive lattice expansions. 
		Math. Scand. 
		\textbf{94}, 13--45 (2004)   
		
\bibitem{HirHod97}
		Hirsch, R., Hodkinson, I.: 
		Complete representations in algebraic logic. 
		J. Symb. Logic 
		\textbf{62}, 816--847 (1997)
		
\bibitem{HirHod02}
    Hirsch, R., Hodkinson, I.: 
    Relation Algebras by Games. 
    North-Holland, Amsterdam (2002) 
    
\bibitem{Hodg93}    
    Hodges, W.: 
    Model Theory. 
    Cambridge University Press, Cambridge (1993)

\bibitem{Rom08}  		
		Roman, S.: 
		Lattices and Ordered Sets. 
		Springer, New York (2008)
		
\bibitem{Sik69} 		
		Sikorski, R.: 
		Boolean Algebras, 3rd edition. 
		Springer-Verlag, Berlin (1969)
    
\end{thebibliography}
\end{document}